\documentclass[11pt]{article}

\usepackage[T1]{fontenc}
\usepackage{lmodern}
\usepackage{amsmath,amssymb,amsthm,mathtools}
\usepackage{enumitem}
\usepackage{geometry}
\usepackage{microtype}
\usepackage[hidelinks]{hyperref}
\usepackage{indentfirst}

\allowdisplaybreaks

\newtheorem{theorem}{Theorem}[section]
\newtheorem{lemma}[theorem]{Lemma}
\newtheorem{claim}{Claim}
\newtheorem{conjecture}[theorem]{Conjecture}

\newcommand{\ceil}[1]{\left\lceil #1\right\rceil}
\newcommand{\floor}[1]{\left\lfloor #1\right\rfloor}
\newcommand{\Nb}{N^{\mathrm b}}
\newcommand{\Nr}{N^{\mathrm r}}

\title{On 
monochromatic path covers conjecture of Erd\H{o}s--Gy\'arf\'as}
\author{Hangdi Chen\thanks{Email: \texttt{chenhangdi188@126.com}}\\
\small Fujian Key Laboratory of Financial Information Processing, Putian University,\\[-2pt]
\small Putian 351100, P.R. China
\and
Yaojun Chen\thanks{Email: \texttt{yaojunc@nju.edu.cn}}\\
\small School of Mathematics, Nanjing University, Nanjing 210093, P.R. China}
\date{}

\begin{document}
\maketitle

\begin{abstract}
Erd\H{o}s and Gy\'arf\'as conjectured in 1995 that, in every red--blue
edge-coloring of a complete graph $K_n$, the vertex set can
be covered by at most $\sqrt n$  monochromatic paths, all of the same color.
Pokrovskiy, Versteegen and Williams (JCT-B, 2026) proved the conjecture for all
sufficiently large $n$.  In this paper, by using minimal counterexample method, we confirm the conjecture completely.
\end{abstract}

\medskip

{\bf Keywords:} Complete graph, edge-coloring, monochromatic path covers

\section{Introduction}

A monochromatic path is one whose edges are colored the same.
Gerencs\'er and Gy\'arf\'as~\cite{GerencserGyarfas1967} started to investigate the monochromatic path covers of a 2-edge-colored complete graph $K_n$, and established the following result.

\begin{theorem}[Gerencs\'er and Gy\'arf\'as~\cite{GerencserGyarfas1967}]
\label{thm:two-paths}
The vertex set of every red--blue edge-colored $K_n$ can be covered
by two monochromatic paths.
\end{theorem}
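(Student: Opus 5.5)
We prove the statement by induction on $n$, in the stronger form that $V(K_n)$ is covered by a red path $R$ and a blue path $B$ which are vertex-disjoint and share a common endpoint. Equivalently, there is a Hamiltonian path of $K_n$ that consists of a red segment followed by a blue segment, either of which may be empty or a single vertex. For $n\le 2$ this is trivial: a single vertex, or a single edge which is monochromatic.

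For the induction step, let $v$ be a vertex and apply the hypothesis to $K_n-v$. This gives a Hamiltonian path $P=x_1x_2\cdots x_{n-1}$ of $K_n-v$ with $x_1\cdots x_k$ red and $x_k\cdots x_{n-1}$ blue, where the junction vertex is $x_k$. We insert $v$ by a case analysis on the colours of the edges from $v$ to three vertices: the two ends $x_1$ and $x_{n-1}$, and the junction $x_k$.

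\emph{Case 1.} The edge $vx_1$ is red. Then $v x_1\cdots x_k x_{k+1}\cdots x_{n-1}$ is red up to $x_k$ and blue afterwards, so it has the required form.

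\emph{Case 2.} The edge $vx_{n-1}$ is blue. Then append $v$ at the blue end of $P$.

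\emph{Case 3.} The edge $vx_1$ is blue and $vx_{n-1}$ is red. Now look at $vx_k$.
\begin{itemize}
\item If $vx_k$ is red, take the red segment $x_1\cdots x_k$, then $x_kv$, then $vx_{n-1}$ (both red), and then continue $x_{n-1}x_{n-2}\cdots x_{k+1}$, which is blue. This is a Hamiltonian path that is red followed by blue, with the junction now at $x_{n-1}$.
\item If $vx_k$ is blue, argue symmetrically. Start with $x_{k-1}x_{k-2}\cdots x_1$, which is red. Then take $x_1v$ and $vx_k$ (both blue), followed by $x_k\cdots x_{n-1}$ (blue). To put this in red-then-blue order, read it as the red path $x_{k-1}\cdots x_1$ followed by the blue path $x_1 v x_k x_{k+1}\cdots x_{n-1}$. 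The junction is $x_1$, and all vertices are covered.
\end{itemize}

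The degenerate cases $k=1$ and $k=n-1$ need separate care, since there one segment is a single vertex. When $k=1$ the whole path $P$ is blue, and Case 3 with $vx_1$ blue simply prepends $v$: the red part is $\{v\}$ and the blue part is $v x_1\cdots x_{n-1}$. The case $k=n-1$ is symmetric.

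The main obstacle is exactly this degenerate bookkeeping, together with checking that each rerouted sequence is a genuine path that visits every vertex once. The only subtle step is the rerouting through the junction $x_k$. This step is what requires strengthening the hypothesis so that the two paths meet at a common endpoint. With the strengthened form, the two monochromatic paths covering $V(K_n)$ are the red and blue segments, which proves the theorem.
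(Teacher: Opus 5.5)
Your proof is correct. The paper gives no proof of this theorem to compare with: it quotes the result from Gerencs\'er and Gy\'arf\'as as motivation and never uses it later. Your argument is the standard inductive proof, essentially the original one, of the stronger fact that $K_n$ has a Hamiltonian path consisting of a red path followed by a blue path. That stronger form is exactly what makes the induction go through, and it gives a self-contained proof where the paper relies on a citation.

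I checked the case analysis:
\begin{itemize}
\item Cases 1 and 2 work for every $k$.
\item Both bullets of Case 3 give genuine Hamiltonian paths whenever $2\le k\le n-2$. Each edge of the rerouted blue part $x_{n-1}x_{n-2}\cdots x_{k+1}$ lies inside the blue segment of $P$.
\item For $k=1$, Case 3 forces $vx_k=vx_1$ to be blue, and prepending $v$ is correct. For $k=n-1$, it forces $vx_k=vx_{n-1}$ to be red, and appending $v$ is correct. So the degenerate cases are fully covered.
\end{itemize}

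One wording slip: the red and blue paths are not ``vertex-disjoint.'' They meet in exactly one vertex, their common endpoint. For the same reason, neither segment is ever empty: each contains at least the junction vertex, though it may consist of that vertex alone.
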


The two paths in Theorem~\ref{thm:two-paths} need not have the same color.
%As Gy\'arf\'as recalls in his survey~\cite{Gyarfas2016}, Erd\H{o}s initially
%interpreted the theorem as 
By imposing a same-color requirement, Erd\H{o}s and Gy\'arf\'as  studied the minimum number of
monochromatic paths of one color needed to cover all vertices of a
red-blue edge-colored complete graph, and  obtained the following.
%bound~\cite{ErdosGyarfas1995}.

\begin{theorem}[Erd\H{o}s and Gy\'arf\'as~\cite{ErdosGyarfas1995}]
\label{thm:two-root-n}
The vertex set of every red--blue edge-colored $K_n$
can be covered by at most $2\sqrt n$ monochromatic paths, all of the same
color.
\end{theorem}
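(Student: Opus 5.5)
Here is a plan for the Erd\H{o}s--Gy\'arf\'as bound $2\sqrt n$. The idea is to play the two colors against each other. Let $P_r$ be the minimum number of vertex-disjoint red paths covering $V(K_n)$, and let $P_b$ be the same for blue. We may assume $P_r>\sqrt n$, and we will show this forces $P_b\le 2\sqrt n$. The natural tool is a maximality argument on red path covers. Note that paths covering the vertex set may always be taken vertex-disjoint, by shortcutting along the same color. This shortcutting is not automatic in general, so it is cleaner to argue directly with disjoint covers.

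First, fix a red path cover $\mathcal P=\{Q_1,\dots,Q_k\}$ with $k=P_r$ minimal. Let $x_i,y_i$ be the endpoints of $Q_i$; if $Q_i$ is a single vertex, take $x_i=y_i$. By minimality, no red edge joins an endpoint of $Q_i$ to an endpoint of $Q_j$ for $i\ne j$, since otherwise we could concatenate the two paths. Hence the set $E=\{x_1,\dots,x_k\}$ of chosen endpoints, one per path, spans a blue clique of size $k$. More useful is this: a vertex $v$ interior to $Q_j$, together with the rotation/extension (P\'osa-type) moves, gives further restrictions. I would use a simpler counting instead, as follows.

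Second, the core step is an induction on $n$ of the form: if $K_n$ contains a red path cover of size $a$, or a blue path cover of size $b$, with $ab\ge$ something, and so on. Concretely, I would aim for the following lemma. \emph{If the minimum red cover has $k$ paths, then $V$ can be covered by at most $\lceil n/k\rceil+k$ blue paths} (or some similar bound). Taking $k\approx\sqrt n$ as the threshold then yields $\min(k,\ n/k+k)$, which needs a balancing. The better approach is to find a \emph{blue} structure. The endpoints $x_1,\dots,x_k$ form a blue clique, so they lie on one blue path. Now remove that blue path and repeat on the remaining $n-k$ vertices. In the remaining graph, the minimum red cover number $k'$ is at least $k-\ldots$; we need it to stay above $\sqrt n$, and otherwise we stop and output the red cover. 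Iterating, each round removes one blue path containing at least about $\sqrt n$ vertices, as long as the red cover number stays above $\sqrt n$. So after at most $\sqrt n$ rounds, either all vertices are covered by blue paths, or the remainder has a red cover with $\le\sqrt n$ paths.

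The main obstacle is the mismatch of colors in the final combination. In that case we have $\le\sqrt n$ blue paths plus $\le\sqrt n$ red paths, which is not monochromatic. I would fix this by running the process in the other order. Set $m=\lceil\sqrt n\rceil$ and repeatedly extract a blue path of at least $m$ vertices from the current vertex set $U$ for as long as the red cover number of $U$ exceeds $m$, using the blue clique on endpoints. If the process exhausts $U$, we have $\le n/m\le\sqrt n$ blue paths. If it stops, then $U$ has a red cover of $\le m$ paths, and the removed part consists of $t\le\sqrt n$ blue paths. We then need to absorb the removed blue-path vertices into red paths, and here the bound $2\sqrt n$ leaves slack. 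The removed vertices $W$ (with $|W|\ge tm$) can be covered recursively, since by Theorem~\ref{thm:two-paths}-style induction $K[W]$ is coverable. The cleanest version I would try first is an induction on $n$ with hypothesis ``cover by $\le 2\sqrt n$ paths of one color'' applied to $K[W]$ and $K[U]$ separately, giving $2\sqrt{|W|}+2\sqrt{|U|}$, which is too much. So I would instead keep both a red count and a blue count and prove the stronger two-parameter statement: if the red cover number is $r$, then the blue cover number is at most $\lceil n/r\rceil$ plus a lower-order term. Getting that inequality, $r\cdot b\lesssim n$, exactly is the step I expect to be hardest, and I would attack it via the endpoint blue clique combined with the concatenation of blue cliques.
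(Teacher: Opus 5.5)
There is a genuine gap: the proposal never produces a monochromatic cover, and the step meant to do so is left unproved. (The paper only cites this theorem and gives no proof of it.) Your preliminary observations are fine. In a minimum vertex-disjoint red path partition, endpoints of distinct paths are joined by blue edges. Your extraction process therefore yields fewer than $\sqrt n$ blue paths on a set $W$, together with at most $m=\lceil\sqrt n\rceil$ red paths on $U=V\setminus W$.

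Nothing you have says anything about blue covers of $U$ or red covers of $W$. Applying induction to both parts costs $2\sqrt{|U|}+2\sqrt{|W|}$, as you note. Your fallback, that red cover number $r$ forces blue cover number about $\lceil n/r\rceil$, is only asserted. It is also far too strong to be the lemma behind a $2\sqrt n$ bound: it would give $\min\{r,b\}\le\sqrt n+o(\sqrt n)$, essentially Conjecture~\ref{conj:EG} itself. Separately, your remark that overlapping covers can be made disjoint by shortcutting is false in general, as you immediately concede.

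The missing idea is a way to pay for a red/blue mixture with one extra path. This is the argument of Lemma~\ref{lem:intersection}: if a red path $R$ and a blue path $P$ share a vertex set $S$, cover $V\setminus S$ inductively in one color and add whichever of $R,P$ has that color. This gives $1+g(n-|S|)$ paths.

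\begin{itemize}
\item Let $A$ be a longest monochromatic path, say blue, with $B=V\setminus A$ and $b=|B|$.
\item If $b+1\le 2\sqrt n$, then $A$ plus singletons suffices.
\item Otherwise, Lemma~\ref{lem:long-path} gives $b<\lceil |A|/2\rceil$. If the coloring is not a cut coloring, Lemma~\ref{lem:EG-extension} gives a red path through $B$ meeting $A$ in $b+2$ vertices. Since $b>2\sqrt n-1$ forces $n-b-2<(\sqrt n-1)^2$, induction gives at most $1+2\sqrt{n-b-2}<2\sqrt n$ paths.
\item In the cut-coloring case, choose $A$ so that it closes to a blue cycle. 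Maximality then makes every $A$--$B$ edge red. Red paths alternating between $A$ and (reused) vertices of $B$ cover everything with $\lceil |A|/(b+1)\rceil\le\lceil\sqrt n/2\rceil$ paths.
\end{itemize}

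This balancing of a longest path against a red path meeting it in many vertices is what your endpoint-clique approach lacks.
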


They further conjectured that the factor $2$ can be removed.

\begin{conjecture}[Erd\H{o}s and Gy\'arf\'as~\cite{ErdosGyarfas1995}]
\label{conj:EG}
The vertex set of every red--blue edge-colored $K_n$
can be covered by at most $\sqrt n$ monochromatic paths, all of the same
color.
\end{conjecture}

The paths appearing in Theorems \ref{thm:two-paths} and \ref{thm:two-root-n} and in Conjecture \ref{conj:EG} are allowed to intersect. Recently, Pokrovskiy, Versteegen and Williams \cite{PokrovskiyVersteegenWilliams2026} proved Conjecture \ref{conj:EG} for all sufficiently large $n$.

\begin{theorem}[Pokrovskiy, Versteegen and Williams~\cite{PokrovskiyVersteegenWilliams2026}]
\label{thm:PVW}
For every $n>20^{40}$, the vertex set of every red--blue edge-colored
$K_n$  can be covered by at most $\sqrt n$
monochromatic paths, all of the same color.
\end{theorem}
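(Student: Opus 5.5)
The statement is Theorem~\ref{thm:PVW}, a cited result of Pokrovskiy, Versteegen and Williams. Its proof has to handle all $n>20^{40}$, so no finite check is possible and the argument must be structural. My plan is to prove a stronger, more flexible statement by induction on $n$, namely a ``deficiency'' version. For a red--blue coloured $K_n$ and a colour $c$, let $p_c$ denote the minimum number of $c$-coloured paths covering $V(K_n)$. I would aim to show that $p_{\mathrm r}\,p_{\mathrm b}\le n$ up to lower-order terms. This gives $\min(p_{\mathrm r},p_{\mathrm b})\le\sqrt n$.

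The starting point is the Erd\H{o}s--Gy\'arf\'as argument behind Theorem~\ref{thm:two-root-n}. Take a maximal family of red paths that is greedily built from a large blue-independent structure. Equivalently, use the Gallai--Milgram theorem: the vertices of a graph can be covered by at most $\alpha$ vertex-disjoint paths, where $\alpha$ is its independence number. Apply it to the red graph, whose independent sets are blue cliques. This gives $p_{\mathrm r}\le\alpha(\text{red})=\omega(\text{blue})$. A blue clique of size $k$ already contains a blue Hamiltonian path. The task is therefore to trade off the size of blue cliques against blue path covers of the remainder.

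The key steps are as follows.
\begin{enumerate}[label=(\roman*)]
\item Let $k=\omega(\text{blue})$ and suppose $k>\sqrt n$, since otherwise we are done by Gallai--Milgram.
\item Take a maximum blue clique $Q$. Each vertex outside $Q$ has a red neighbour in $Q$. Use these red edges to absorb outside vertices into red paths through $Q$, or blue edges to extend blue paths.
\item Run a stability analysis. Either the colouring is close to the extremal example, which is a blow-up of $\sqrt n$ blue cliques of size $\sqrt n$ joined in red (or its transpose), or there is slack. In the slack case a refined counting gives strictly fewer than $\sqrt n$ paths.
\item In the near-extremal case, perform exact cleanup. Reassign the few atypical vertices using a rotation--extension argument so that paths absorb them without increasing the count.
\end{enumerate}

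I expect the main obstacle to be step (iv). There the bound must be exactly $\sqrt n$, not $(1+o(1))\sqrt n$, and this is presumably where the huge threshold $20^{40}$ enters, via error terms that must be beaten by $\sqrt n$. I would first try to absorb each exceptional vertex into one of the $\sqrt n$ clique-paths through two edges of the same colour. This uses the pigeonhole principle on its colour degrees into each near-clique. I would fall back on the Gallai--Milgram bound inside the small exceptional set if direct absorption fails.
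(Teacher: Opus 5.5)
The paper does not prove this theorem; it only cites it. The argument it does give is the minimal-counterexample proof of Theorem~\ref{thm:main}, which contains this statement as a special case. Measured against that, your proposal has a genuine gap. It is a plan whose one concrete structural claim is wrong, and whose load-bearing steps are missing.

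The configuration at the centre of your stability analysis is not extremal. In the blow-up of $\sqrt n$ blue cliques of order $\sqrt n$ joined in red, the red graph is a balanced complete multipartite graph. That graph has a Hamiltonian path, so one red path covers $K_n$; dually for the transpose. The colourings that really need $\sqrt n$ paths in each colour look different. Take a blue clique $B$ on $n-\sqrt n+1$ vertices and a set $A$ of $\sqrt n-1$ vertices all of whose edges are red. The vertices of $A$ are blue-isolated, so blue needs $|A|+1$ paths. A red path never has two consecutive vertices in $B$, so it meets $B$ in at most $|A|+1$ vertices, and red also needs $\sqrt n$ paths.

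Steps (ii)--(iv) then contain no estimate that yields an exact integer bound. ``Refined counting gives strictly fewer than $\sqrt n$'' and the rotation--extension cleanup are asserted, not argued. Falling back on Gallai--Milgram inside the exceptional set only adds paths beyond the budget.

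The strengthening $p_{\mathrm r}p_{\mathrm b}\le n$ is also asserted without argument. In its ``up to lower-order terms'' form it cannot give the result. For $n=c^2+2c$, any additive error $E\ge1$ in $p_{\mathrm r}p_{\mathrm b}\le n+E$ fails to rule out $\min\{p_{\mathrm r},p_{\mathrm b}\}=c+1=\floor{\sqrt n}+1$. Your Gallai--Milgram reduction $p_{\mathrm r}\le\omega(\text{blue})$ is correct, but it only disposes of an easy case.

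The missing idea is the exact structure around a longest monochromatic path. The paper uses this, building on the auxiliary lemmas of Pokrovskiy--Versteegen--Williams.

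\begin{itemize}
\item Take a longest monochromatic path $Q$, say blue, and set $X=V(Q)$. Split the other vertices into $Y_0$ (no blue neighbour on $Q$) and $Y_1$. The extremal colouring above is exactly the case $Y_1=\varnothing$, $|Y_0|=\sqrt n-1$.
\item Lemma~\ref{lem:blue-cover} gives a blue cover by $1+\ceil{a_1/2}+a_0$ paths, so a counterexample has $a_0+\ceil{a_1/2}\ge c$.
\item On the other hand, $Y_0$ is red-complete to $X$. Red paths zig-zagging through $Y_0$ therefore cover $a_0+1$ vertices of $X$ each (Lemma~\ref{lem:residual}).
\item Maximality of $Q$ makes the predecessors of the blue neighbours of any $y\in Y_1$, together with $v_\ell$, a red clique (Lemma~\ref{lem:red-clique}).
\item Minimality of the counterexample gives $|V(P_1)\cap V(P_2)|\le r=n-\floor{\sqrt n}^2$ for every red $P_1$ and blue $P_2$ (Lemma~\ref{lem:intersection}). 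So any red path through $r+1$ vertices of $Q$ is already a contradiction (Lemma~\ref{lem:splice}).
\end{itemize}

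Everything after that is exact counting in $c,r,w,a_0,a_1,\mu$, with no stability step, no error terms and no lower bound on $n$. To repair your plan you would need at least the correct extremal structure and a mechanism that produces exact counts. The longest-path decomposition supplies both.
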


In this paper,  we confirm Conjecture \ref{conj:EG} completely.
%Our main result removes the lower bound on $n$.

\begin{theorem}
\label{thm:main}
For every positive integer $n$, the vertex set of every red--blue
edge-colored  $K_n$ can be covered by at most
$\sqrt n$ monochromatic paths, all of the same color.
\end{theorem}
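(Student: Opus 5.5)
We argue by a minimal counterexample.  Let $n$ be the least positive integer for which some red--blue coloring of $K_n$ admits no cover by at most $\sqrt n$ monochromatic paths of one color, and fix such a coloring.  By Theorem~\ref{thm:PVW} we know $n\le 20^{40}$, but the argument should not depend on this bound.  Put $k=\floor{\sqrt n}$.  Note that $\floor{\sqrt n}$ only increases when $n$ passes a perfect square.  Thus if $n$ is not a perfect square, then $\floor{\sqrt{n-1}}=k$, and the task is to extend a cover of $K_n-v$ by $k$ paths without adding a path.  If $n=k^2$, then $K_n-v$ may use only $k-1$ paths, and we must add one path.

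\textbf{Step 1 (deletion of one vertex).}  Pick a vertex $v$.  By minimality, $K_n-v$ has a cover by at most $\floor{\sqrt{n-1}}$ paths of one color, say red, $P_1,\dots,P_t$.  When $n$ is a perfect square, we have $t\le k-1$ and can simply add the trivial path $\{v\}$; so the only hard case is $k^2<n<(k+1)^2$.  In that case we assume $t=k$ exactly, since otherwise we are done.

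Then $v$ cannot be absorbed into any red path.  So $v$ sends blue edges to both endpoints of every $P_i$ (or to the single vertex, when $P_i$ is trivial), and to both vertices of every consecutive pair on $P_i$; otherwise $v$ could be inserted.  Hence the red neighbours of $v$ on each $P_i$ are pairwise non-consecutive and are not endpoints.  The same holds for every choice of $v$ and every optimal red cover.  We also choose the cover to maximise the total length, or lexicographically, so that rotation arguments in the style of P\'osa apply.

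\textbf{Step 2 (the blue side).}  We use the blue structure to build a blue cover with at most $k$ paths.  The endpoints of $P_1,\dots,P_k$, together with $v$, form a set of $2k+1$ vertices.  Using P\'osa rotations on the red paths, the set of possible endpoints becomes large.  If two endpoints of different paths were joined in red, we could merge the two paths and reduce the red count, contradicting $t=k$.  So the set $E$ of reachable endpoints spans a blue clique-like structure, while $v$ is blue to all of $E$.

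Next we follow the Erd\H{o}s--Gy\'arf\'as argument behind Theorem~\ref{thm:two-root-n}.  We take a largest vertex set $A$ that induces a blue clique, or more generally one containing a blue Hamiltonian path.  For each remaining vertex $u$ we count its red neighbours in $A$.  We then show that either the blue graph contains a path through $A$ that absorbs all but few vertices, which gives a blue cover with at most $\sqrt n$ paths, or the red graph has a cover by $k$ paths including $v$.  The counting runs as follows.  A vertex outside the red cover structure that has few red edges into $A$ can be inserted between consecutive blue vertices of $A$.  A vertex with many red edges gives long red paths alternating between $A$ and outside.

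\textbf{Main obstacle.}  The difficulty is the sharp constant, i.e.\ making the counting exact: the bound $n<(k+1)^2$ must be shown to force either $k$ red or $k$ blue paths.  I would formalise this with a weighted inequality.  Let $a$ be the size of the blue clique obtained from endpoints and let $b$ be the number of red paths.  Gallai--Milgram-type bounds say the blue graph is covered by $\alpha_{\mathrm{blue}}$ paths, where $\alpha_{\mathrm{blue}}$ is the independence number of the blue graph, that is, the red clique number.  Symmetrically, the red graph is covered by (blue clique number) many paths.  Since a red clique and a blue clique meet in at most one vertex, the two clique numbers $r$ and $s$ cannot both exceed $\sqrt n$ in the relevant structured setting.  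I would aim to show that the minimal counterexample structure forces $r\cdot s\ge n$, which gives $\min(r,s)\le\sqrt n$.  This product inequality is where the real work lies, and I expect the rotation-closure argument from Step~1 to supply it.
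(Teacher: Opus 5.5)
\textbf{There is a genuine gap: the proposal does not prove the statement.} Steps~1--2 only collect structure. The step that has to deliver the sharp constant is explicitly left open (``this product inequality is where the real work lies''), and the closing mechanism you propose cannot work as stated.

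\emph{The closing step.} Gallai--Milgram gives a blue cover by $\omega_{\mathrm{red}}$ paths and a red cover by $\omega_{\mathrm{blue}}$ paths. So what you need is $\min(\omega_{\mathrm{red}},\omega_{\mathrm{blue}})\le\sqrt n$. An inequality $rs\ge n$ only yields $\max(r,s)\ge\sqrt n$, which is the wrong direction. Moreover, ``both clique numbers cannot exceed $\sqrt n$'' is false for general colourings: plant a red clique and a disjoint blue clique on $\lfloor\sqrt n\rfloor+1$ vertices each. The fact that a red clique and a blue clique share at most one vertex only gives $\omega_{\mathrm{red}}+\omega_{\mathrm{blue}}\le n+1$. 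You give no argument that a minimal counterexample excludes this configuration.

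\emph{The P\'osa step.} Paths in a cover may overlap, and an optimal one-colour cover may be forced to overlap (think of a red star). So joining endpoints of $P_i$ and $P_j$ by a red edge does not produce a path when $V(P_i)\cap V(P_j)\neq\emptyset$. Likewise the ``$2k+1$ endpoints'' need not be distinct. Also, non-insertability of $v$ means $v$ is blue to \emph{at least one} vertex of each consecutive pair, not to both.

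\emph{The missing idea.} You need a use of minimality strong enough to see the exact threshold. Deleting one vertex only tells you that $v$ cannot be inserted into one particular red cover of $K_n-v$, which gives very little leverage. The paper instead writes $n=c^2+r$ with $c=\lfloor\sqrt n\rfloor$ and deletes $r+1$ vertices. If a red path $P_1$ and a blue path $P_2$ share a set $S$ of $r+1$ vertices, then $G-S$ has at most $c^2-1$ vertices. By minimality it has a one-colour cover by at most $c-1$ paths, and adding whichever of $P_1,P_2$ has that colour covers $G$ with $c$ paths. Hence every red path meets every blue path in at most $r$ vertices.

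Taking $Q$ to be a longest monochromatic path (blue, say), this intersection bound drives the whole argument:
\begin{enumerate}[label=(\alph*)]
\item It gives $|Y|\le r-2$ for $Y=V(G)\setminus V(Q)$, via the Erd\H{o}s--Gy\'arf\'as extension lemma, or an alternating red path in the cut-colouring case.
\item For $y$ with blue neighbours on $Q$, the predecessors of those neighbours together with $v_\ell$ form a red clique $S_y$. Splicing $S_y$ with red paths alternating through $Y$ would create a red path with $r+1$ vertices on $Q$, which is forbidden.
\item It is played off against the Pokrovskiy--Versteegen--Williams blue-cover bound $1+\lceil a_1/2\rceil+a_0$ and against explicit red covers of the leftover part of $V(Q)$ through $Y_0$ and $Y_1$.
\item The proof finishes with a case analysis: $c\ge4$ is split by $2\mu\le r$ versus $2\mu\ge r+1$ (the latter using the hypergraph of triples), and $c\le3$ is done by hand.
\end{enumerate}
Nothing in your sketch plays the role of this intersection bound, and without it there is no mechanism producing the constant $1$ in front of $\sqrt n$.
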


%\paragraph{Sketch of the proof.}
%Suppose that a minimal counterexample exists, and write
%$n=c^{2}+r$, where $c=\lfloor\sqrt n\rfloor$.
%Minimality implies that any red path and any blue path have at most
%$r$ vertices in common. Let $Q$ be a longest monochromatic path,
%assumed to be blue, and set $X=V(Q)$ and $Y=V(G)\setminus X$.
%Partition $Y$ into the set $Y_0$ of vertices having no blue neighbour
%in $X$ and its complement $Y_1$. The maximality of $Q$ forces the
%predecessors of the blue neighbours of each vertex in $Y_1$, together
%with an endpoint of $Q$, to form a red clique in $X$. Alternating
%red-path extensions and efficient red covers of the unused vertices
%of $X$ then strongly restrict the blue neighbourhoods of vertices
%in $Y_1$.

%For $c\ge4$, let $\mu$ be the maximum blue degree from $Y_1$ into $X$.
%If $2\mu\le r$, large common red neighbourhoods yield either a red
%path meeting $Q$ in more than $r$ vertices or a red path cover of size
%at most $c$. If $2\mu\ge r+1$, suitable triples of vertices outside
%$Q$ are encoded in a $3$-uniform hypergraph. Matching and gap
%arguments along $Q$ again produce one of the same two contradictions.
%Hence $c\le3$. The cases
%$c=1,2,3$ are handled directly using bipartite path-cover estimates
%and short alternating red paths, completing the contradiction.
Related problems of this type have also been studied, see \cite{Allen2008,BessyThomasse2010,EugsterMousset2018,Gyarfas1989,GyarfasLehel1973,Gyarfas2016,LuczakRodlSzemeredi1998}, for instance. At the end of this section, we introduce some notation that will be used throughout this paper. 
For a graph $G$, let $V(G)$ be the \emph{vertex set} of $G$ and  $E(G)$ be the \emph{edge set} of $G$. For a subset $A$ of $V(G)$, let $G[A]$ denote the subgraph of $G$ induced by $A$.  Let $N_G(v)$ denote the neighbourhood of $v$ consisting of all vertices adjacent to $v$. The \emph{degree} of a vertex $v$, denoted by $d_G(v)$, is the size of $N_G(v)$.  When no confusion can occur, we will omit the subscript $G$. 

A \emph{red--blue edge-coloring} of $G$ assigns either red or
blue to every edge of $G$.  A path or cycle is \emph{red} (respectively,
\emph{blue}) if all its edges are red (respectively, blue).  The \emph{length} of a
path is the number of edges.  We permit paths of length zero, each consisting
of a single vertex; such a path may be regarded as either red or blue.  Thus, a single-vertex path may be assigned either color when considered as monochromatic paths. A collection $\mathcal P$ of red paths is a \emph{red path cover} of $G$ if every
vertex of $G$ lies on at least one member of $\mathcal P$.  A blue path cover is defined analogously. It is worth noting that paths in a cover need not be vertex-disjoint.

\section{Preliminaries}

We first recall two classical results concerning monochromatic paths. Following
Erd\H{o}s and Gy\'arf\'as~\cite{ErdosGyarfas1995}, we say that a red--blue
edge-coloring of $K_n$ is a \emph{cut coloring} if there exists a longest
monochromatic path whose endpoints are joined by an edge of the same color as
the path.

\begin{lemma}[Erd\H{o}s and Gy\'arf\'as~\cite{ErdosGyarfas1995}]
\label{lem:EG-extension}
Suppose that a red--blue edge-coloring of $K_n$ is not a cut coloring. Let
$A$ be the vertex set of a longest monochromatic path. By symmetry, we may
assume that this path is blue. Let $B\subseteq V(K_n)\setminus A$. If $|B|<\ceil{|A|/2}$, then there exists a red path containing every vertex of
$B$ together with $|B|+2$ vertices of $A$.
\end{lemma}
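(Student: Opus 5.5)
The plan is to build the red path explicitly from the structure of a longest blue path. Write the longest blue path as $P=v_1v_2\cdots v_m$ with $A=V(P)$ and $m=|A|$, and put $B=\{b_1,\dots,b_k\}$ with $k<\ceil{m/2}$. First I collect the standard consequences of maximality of $P$ and of the coloring not being a cut coloring:
\begin{enumerate}[label=(\roman*)]
\item $v_1v_m$ is red.
\item No $u\notin A$ is blue-adjacent to $v_1$ or to $v_m$, since otherwise $P$ extends.
\item No $u\notin A$ is blue-adjacent to two consecutive vertices $v_i,v_{i+1}$, since otherwise $u$ can be inserted between them.
\item If $u\notin A$ has $uv_i$ blue, then $v_1v_{i+1}$ and $v_{i-1}v_m$ are red, since otherwise rerouting gives a blue path on $m+1$ vertices.
\item If $u,u'\notin A$ with $uv_i$ and $u'v_{i+1}$ both blue, then $uu'$ is red, since otherwise $v_1\cdots v_i\,u\,u'\,v_{i+1}\cdots v_m$ is a longer blue path.
\end{enumerate}

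The target path has the form
\[
v_m\,v_1\,b_1\,x_1\,b_2\,x_2\cdots x_{k-1}\,b_k\,x_k ,
\]
which uses the $k+2$ vertices $v_m,v_1,x_1,\dots,x_k$ of $A$. The edge $v_mv_1$ is red by (i), and $v_1b_1$ is red by (ii). For the connectors I use disjoint consecutive pairs inside the interior $v_2,\dots,v_{m-1}$. There are $\floor{(m-2)/2}=\ceil{m/2}-1\ge k$ such pairs, which is exactly where the hypothesis $|B|<\ceil{|A|/2}$ enters. Assign pairs $\pi_1,\dots,\pi_k$. From $\pi_j$ ($j<k$) I want a vertex $x_j$ red to both $b_j$ and $b_{j+1}$. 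From $\pi_k$ I only need a vertex red to $b_k$, which exists by (iii).

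The main obstacle is a \emph{bad junction}: the pair $\pi_j=\{v_i,v_{i+1}\}$ has no common red neighbour of $b_j$ and $b_{j+1}$. By (iii) this happens only when one of $b_j,b_{j+1}$ is blue to $v_i$ and the other is blue to $v_{i+1}$. In that case (v) gives $b_jb_{j+1}$ red, so the connector can simply be skipped by joining $b_j$ directly to $b_{j+1}$. This, however, loses one vertex of $A$, and I have to buy it back. For the first bad junction I would use (iv): $v_1v_{i+1}$ (or symmetrically $v_iv_m$) is red, so I can prepend $v_{i+1}$ in front, as $v_{i+1}\,v_1\,v_m\,b_1\cdots$ (after swapping the roles of $v_1,v_m$ if needed). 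The unused pair $\pi_j$ is not touched by the prepended vertex as long as I take $v_{i+1}$ from it.

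To handle several bad junctions I would first try to choose the ordering of $B$ and the assignment of pairs greedily. At step $j$, pick $b_{j+1}$ among the remaining vertices of $B$ that has a common red neighbour with $b_j$ in the next available pair. I would then argue, by counting the blue neighbours of $b_j$ in $\pi_j$ together with (v), that a failure for all remaining candidates forces a long blue rerouting contradicting the maximality of $P$. If that counting does not close, the fallback is an induction on $k$. Apply the statement to $B\setminus\{b\}$ to get a red path through it and $k+1$ vertices of $A$. Then insert $b$, using the direct red edges from (v) or the endpoint edges from (ii) to absorb one further vertex of $A$. The delicate part in either route is making sure the extra vertex of $A$ is always available, i.e.\ that at most one repair of the type in the previous paragraph is ever needed.
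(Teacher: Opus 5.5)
The paper does not prove this lemma; it quotes it from Erd\H{o}s--Gy\'arf\'as. Your proposal therefore has to stand on its own. Your facts (i)--(v) are correct, but there are two genuine gaps.

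\textbf{First gap: odd $m$.} The count $\floor{(m-2)/2}=\ceil{m/2}-1$ holds only for even $m$. For odd $m$ the interior $v_2,\dots,v_{m-1}$ contains only $(m-3)/2=\ceil{m/2}-2$ disjoint consecutive pairs. So for $k=(m-1)/2$, which the hypothesis allows, you are one connector short.

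Worse, in that case the required red path has $k+(k+2)=m+1$ vertices, more than a longest monochromatic path. No construction can succeed there. What must be proved is that this configuration cannot occur in a non-cut coloring, and (i)--(v) cannot prove it. In $K_7$, colour red a clique on $U=\{v_1,v_3,v_5,y_1,y_2\}$ and the edge $v_2v_4$, and colour blue every edge between $U$ and $\{v_2,v_4\}$. Then:
\begin{itemize}
\item $P=v_1v_2v_3v_4v_5$ is a longest monochromatic path;
\item (i)--(v) all hold for $P$ and $B=\{y_1,y_2\}$, with $|B|=2<\ceil{5/2}$;
\item yet there is no red path on six vertices.
\end{itemize}
The lemma survives here only because a red Hamilton path of $U$ is a longest monochromatic path whose ends are joined in red. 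So the non-cut hypothesis must be applied to longest paths other than $P$, red ones included, and your plan never does this.

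\textbf{Second gap: the central step, even for even $m$.} Your first repair is correct. At a bad junction on $\{v_i,v_{i+1}\}$, (v) makes $b_jb_{j+1}$ red and (iv) makes $v_1v_{i+1}$ red, so $v_{i+1}v_1v_mb_1\cdots$ recovers the lost vertex.

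A second bad junction on $\{v_{i'},v_{i'+1}\}$ is a different matter. It again only yields vertices red to $v_1$ or to $v_m$, and after the first repair both of these are interior vertices of the path. Your mechanism cannot absorb it: you would need red edges such as $v_{i+1}v_{i'+1}$ or $v_{i'}b_1$, and nothing you have established supplies them.

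The whole argument therefore rests on one of two things: showing that $B$ can be ordered and the pairs assigned so that at most one bad junction occurs, or some other way of absorbing several. That is exactly the part left at ``I would argue by counting''. The inductive fallback has the same hole, since inserting one more vertex of $B$ while gaining one more vertex of $A$ is precisely the step that needs proof. As written, the proposal is a plan whose key step is unproved.
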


\begin{lemma}[Gerencs\'er and Gy\'arf\'as~\cite{GerencserGyarfas1967}]
\label{lem:long-path}
Every red--blue edge-coloring of $K_n$ contains a monochromatic path on at
least $\floor{2n/3}+1$ vertices.
\end{lemma}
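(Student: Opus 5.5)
The statement to prove is Lemma~\ref{lem:long-path}: every red--blue coloring of $K_n$ has a monochromatic path on at least $\floor{2n/3}+1$ vertices. I would argue by induction on $n$, keeping as an invariant a red path and a blue path that are vertex-disjoint, cover all vertices processed so far, and satisfy a suitable endpoint condition. For small $n$ the claim is immediate: $n=1,2$ are trivial, and for $n=3$ any two edges of the same color form a path on $3$ vertices. The extremal example is a red $K_{2k}$ whose vertices are all joined by blue edges to an extra set of $k$ vertices. This shows that the bound cannot be improved.

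\textbf{Step 1: Gerencs\'er--Gy\'arf\'as two-path structure.} First I would prove the stronger statement that $V(K_n)$ can be partitioned into a red path $R$ and a blue path $B$ such that the last vertex of $R$ and the last vertex of $B$ are joined appropriately. Either path may be empty or a single vertex. The proof is by induction, adding one vertex $v$ at a time. Let $r$ and $b$ be the current endpoints of $R$ and $B$.
\begin{itemize}
\item If $vr$ is red, append $v$ to $R$.
\item If $vb$ is blue, append $v$ to $B$.
\item Otherwise $vr$ is blue and $vb$ is red. Look at the edge $rb$. If it is red, extend $R$ by $r\to b\to v$, removing $b$ from $B$. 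If it is blue, extend $B$ by $b\to r\to v$ symmetrically.
\end{itemize}
One must track the invariant "the edge between the two endpoints" carefully so that this case analysis always closes.

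\textbf{Step 2: from a partition to a long path.} The partition alone gives a path on only about $n/2$ vertices, so the real work is to reach $2n/3$. I would take a longest monochromatic path $P$, say blue, on $m$ vertices, and assume for contradiction that $m\le \floor{2n/3}$. Put $A=V(P)$ and $C=V\setminus A$, so $|C|\ge n/3$.

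There are two cases.
\begin{itemize}
\item If the endpoints of $P$ are joined by a blue edge, then $P$ closes into a blue cycle. By maximality, every vertex of $C$ is joined to every vertex of the cycle by a red edge. Then red zigzag paths alternating between $C$ and $A$ give a red path on about $2\min(|C|,m)$ vertices. This exceeds $m$ once $|C|>m/2$, which follows from $m\le 2n/3$.
\item Otherwise we are in the non-cut case, and I would invoke Lemma~\ref{lem:EG-extension} with $B\subseteq C$ of size $\ceil{m/2}-1$, provided $|C|$ allows it. This yields a red path on $2|B|+2\approx m+1$ vertices, contradicting maximality. If $|C|<\ceil{m/2}-1$, then $m>2n/3$ directly.
\end{itemize}

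The main obstacle is making the arithmetic sharp, with exact floors, in both cases. In the cut case I expect to need the rotation trick: if some vertex of $C$ had a blue edge to the cycle, opening the cycle at that vertex would give a longer blue path. I also expect to have to handle $|C|$ near $m/2$ separately, using the fact that $C$ itself contains a monochromatic path of either color by induction.
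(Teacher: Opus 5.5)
The paper does not prove this lemma; it is quoted from Gerencs\'er and Gy\'arf\'as, so your argument has to stand on its own. As written, it has a genuine gap in the non-cut case: it fails whenever $m$ is even.

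Lemma~\ref{lem:EG-extension} permits at most $|B|=\ceil{m/2}-1$. The red path it produces is guaranteed only $|B|+(|B|+2)=2\ceil{m/2}$ vertices. For odd $m$ this is $m+1$ and maximality is violated. For even $m$ it is exactly $m$, which is perfectly consistent with $P$ being a longest monochromatic path; your ``$\approx m+1$'' hides this.

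This is not a marginal case. Suppose you add induction on $n$:
\begin{itemize}
\item For $n\equiv 1\pmod 3$, the bound for $K_{n-1}$ already gives the claim.
\item For $n=3p+2$, it leaves only the odd value $m=2p+1$, which your argument does exclude.
\item For $n=3p$, it leaves exactly $m=2p$ and $|C|=p$ in a non-cut colouring. This is the sharp regime of the lemma, and there you obtain a red path on $2p$ vertices and no contradiction.
\end{itemize}

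Your closing remarks do not cover this. The rotation trick concerns only the cut case. A monochromatic path inside $C$, on roughly $2p/3$ vertices, is not linked to anything. You need an extra ingredient for even $m$. One option is control of an endpoint of the red path from Lemma~\ref{lem:EG-extension}: if it ended at an endpoint of $P$, then, since both endpoints of $P$ are red to all of $C$ by maximality, the unused vertex of $C\setminus B$ could be appended. Another option is a direct argument, such as the classical Gerencs\'er--Gy\'arf\'as induction establishing that the 2-colour Ramsey number of the path on $k$ vertices is $k+\floor{k/2}-1$.

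\textbf{Smaller points.}
\begin{itemize}
\item Step~1 is just Theorem~\ref{thm:two-paths}. It is never used in Step~2 and can be dropped. Its greedy insertion also needs no extra invariant: the case $vr$ blue, $vb$ red always closes by inspecting the colour of $rb$.
\item Both Lemma~\ref{lem:EG-extension} and Lemma~\ref{lem:long-path} are imported black boxes in this paper. Check that the proof of Lemma~\ref{lem:EG-extension} you rely on is independent of Lemma~\ref{lem:long-path}; otherwise the derivation is circular.
\item Your cut case is fine, provided you choose $P$ to be a cut witness whenever the colouring is a cut colouring, rather than branching on an arbitrary longest path. All $A$--$C$ edges are then red. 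Since $|C|\ge m/2$, the zigzag starting and ending in $A$ has $2\min\{|C|,m-1\}+1\ge m+1$ vertices for $m\ge 2$.
\end{itemize}
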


We shall also use the following three auxiliary lemmas due to Pokrovskiy,
Versteegen, and Williams~\cite{PokrovskiyVersteegenWilliams2026}. Recall that a
graph $G$ is bipartite with parts $X$ and $Y$ if $X$ and $Y$ partition $V(G)$
and every edge of $G$ has one endpoint in $X$ and the other in $Y$.

\begin{lemma}[Pokrovskiy, Versteegen and Williams~\cite{PokrovskiyVersteegenWilliams2026}]
\label{lem:bip-path}
Let $G$ be a bipartite graph with parts $X$ and $Y$.  If every vertex of $Y$
has degree at least $(|X|+|Y|)/2$, then $G$ contains a path covering $2|Y|$ vertices.
\end{lemma}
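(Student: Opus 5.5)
The proof is a direct greedy construction, driven by one counting fact: every two vertices of $Y$ have many common neighbours in $X$.

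Write $k=|Y|$ and $m=|X|$; if $k=0$ there is nothing to prove. Every $y\in Y$ has all its neighbours in $X$, so $m\ge d(y)\ge (m+k)/2$, which gives $m\ge k$. Moreover, for distinct $y,y'\in Y$,
\[
|N(y)\cap N(y')|\ \ge\ d(y)+d(y')-m\ \ge\ (m+k)-m\ =\ k .
\]
So any two vertices of $Y$ have at least $k$ common neighbours in $X$.

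Enumerate $Y=\{y_1,\dots,y_k\}$ arbitrarily. For $i=1,\dots,k-1$ in turn, choose $x_i\in N(y_i)\cap N(y_{i+1})$ distinct from $x_1,\dots,x_{i-1}$. This is possible because the common neighbourhood has at least $k$ vertices and only $i-1\le k-2$ of them have been used. Since the $x_i$ are distinct vertices of $X$ and the $y_i$ are distinct vertices of $Y$, the sequence
\[
y_1x_1y_2x_2\cdots x_{k-1}y_k
\]
is a path on $2k-1$ vertices.

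To reach $2k$ vertices, extend at the end $y_k$. Its degree is at least $(m+k)/2\ge k>k-1$, so $y_k$ has a neighbour $x_k\notin\{x_1,\dots,x_{k-1}\}$. Appending $x_k$ gives a path covering $2k=2|Y|$ vertices, namely all of $Y$ and $k$ vertices of $X$.

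The only step needing care is the distinctness bookkeeping in the greedy choice: the bound $|N(y)\cap N(y')|\ge |Y|$ must beat the number of previously used $x$'s at every step, including the final extension. The computation above shows it does with room to spare. No appeal to the earlier lemmas is needed.
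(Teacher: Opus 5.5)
Your proof is correct: the bound $|N(y)\cap N(y')|\ge d(y)+d(y')-|X|\ge |Y|$ gives enough room for every greedy choice, and the final extension from $y_k$ works because $m\ge k$ yields $d(y_k)\ge k$. The paper does not prove this lemma; it quotes it from Pokrovskiy, Versteegen and Williams, and your greedy common-neighbourhood construction is the natural self-contained argument for it.
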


\begin{lemma}[Pokrovskiy, Versteegen and Williams~\cite{PokrovskiyVersteegenWilliams2026}]
\label{lem:bip-cover}
 Let $G$ be a bipartite graph with parts $X$ and $Y$. Define $X_0=\{x\in X:d_G(x)=|Y|\}$, $X_1=X\setminus X_0$, $Y_0=\{y\in Y:d_G(y)=|X|\}\neq\emptyset$ and $Y_1=Y\setminus Y_0$. Suppose that

(i) $|X|>|Y|$, and

(ii) $X_1=Y_1=\emptyset$ or $|X_0|/|Y_1|>2|X_1|/|Y_0|$.

\noindent Then $G$ contains a collection of at most $\ceil{|X|/(|Y|+1)}$ paths whose union covers
$V(G)$.
\end{lemma}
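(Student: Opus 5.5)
The plan is to build the cover explicitly from paths that alternate between $X$ and $Y$. Put $k=\ceil{|X|/(|Y|+1)}$. A path alternating between the parts and starting and ending in $X$ contains $t$ vertices of $Y$ and $t+1$ vertices of $X$. Paths in a cover may share vertices, so vertices of $Y$ may be reused from path to path. It therefore suffices to partition $X$ into $k$ blocks $X^{(1)},\dots,X^{(k)}$ of sizes at most $|Y|+1$, and to find for each block an alternating path $P_i$ whose $X$-vertices are exactly $X^{(i)}$. The $Y$-vertices used must be chosen so that every vertex of $Y$ lies on some $P_i$. The two facts we use throughout are:
\begin{itemize}[nosep]
\item every vertex of $X_0$ is adjacent to all of $Y$;
\item every vertex of $Y_0$ is adjacent to all of $X$.
\end{itemize}
Thus in an alternating path a vertex of $X_1$ is safe if both its neighbours on the path lie in $Y_0$, and a vertex of $Y_1$ is safe if both its neighbours lie in $X_0$.

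First we dispose of the case $X_1=Y_1=\emptyset$, where $G$ is complete bipartite. Since $|X|\ge|Y|+1$, we can choose the first block of size exactly $|Y|+1$ and the others arbitrarily of size at most $|Y|+1$. We then thread the first block through all of $Y$, and each other block through any $|X^{(i)}|-1$ vertices of $Y$. This gives $k$ paths covering $V(G)$.

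In the general case each path $P_i$ will have a prescribed shape. It consists of a segment in which vertices of $Y_1$ alternate with vertices of $X_0$, followed by a segment in which vertices of $X_1$ alternate with vertices of $Y_0$. The two segments are glued through an $X_0$–$Y_0$ edge, which always exists. Any unused $X_0$/$Y_0$ vertices are appended at the ends, since $X_0\cup Y_0$ spans a complete bipartite graph joined to everything on the other side appropriately. Every vertex of $X_1\cup Y_1$ must appear in exactly one such segment, while the vertices of $X_0$ are partitioned among the blocks and the vertices of $Y_0$ may be reused in all paths. Hence path $P_i$ can absorb some $a_i$ vertices of $X_1$ provided $a_i\le |Y_0|$ (roughly), and some $b_i$ vertices of $Y_1$ provided it receives at least $b_i+1$ vertices of $X_0$. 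Its block size is $|X^{(i)}|=a_i+(\text{its share of }X_0)\le |Y|+1$.

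The key step is a counting argument showing that such an assignment exists with only $k$ paths. We need $\sum a_i=|X_1|$, $\sum b_i=|Y_1|$ and $\sum_i (\text{share of }X_0)=|X_0|$, with each $a_i\le|Y_0|$, each share at least $b_i+1$, and each block of size at most $|Y|+1$. The natural choice is to spread $X_1$ evenly, $a_i\approx|X_1|/k$, and to give each path an $X_0$-share proportional to $|X_0|/k$, putting $Y_1$ into the paths in proportion. The feasibility inequalities then reduce to comparing $|X_1|/k$ with $|Y_0|$ and $|Y_1|/k$ with $|X_0|/k$. The slack needed to handle rounding and the $+1$'s is exactly where I expect hypothesis (ii), $|X_0||Y_0|>2|X_1||Y_1|$, to enter. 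The factor $2$ should pay for the fact that each $Y_1$ vertex costs about one $X_0$ vertex and each $X_1$ vertex costs about one $Y_0$ slot, with a path being able to trade one kind of capacity against the other.

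I expect this balancing to be the main obstacle. It is possible that a single unbalanced path is needed, namely one path devoted entirely to $Y_1$ (using $|Y_1|+1$ vertices of $X_0$), with the remaining paths carrying $X_1$ in chunks of $|Y_0|$. My first attempt would be to compare these two extreme assignments and use (ii) together with $|X|>|Y|$ to show that one of them fits within $k$ paths.
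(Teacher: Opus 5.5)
The paper gives no proof of this lemma (it is imported from Pokrovskiy--Versteegen--Williams), so I am judging your plan on its own. Your path shape is the right one: a $Y_1$--$X_0$ alternating segment glued to an $X_1$--$Y_0$ segment. But the counting framework you build around it cannot succeed, and the counting itself is never carried out.

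The fatal constraint is $\sum_i b_i=|Y_1|$, i.e.\ each vertex of $Y_1$ lies on exactly one path. A path containing $b_i$ vertices of $Y_1$ contains at most $b_i+|Y_0|$ vertices of $Y$. Appending $X_0$/$Y_0$ pairs does not help, since a $Y_0$-vertex occurs at most once on a path. Hence such a path contains at most $b_i+|Y_0|+1$ vertices of $X$. Your feasibility list only imposes $|X^{(i)}|\le|Y|+1$, which such a path cannot attain when $b_i<|Y_1|$. Summing, your $k$ paths contain at most $|Y_1|+k(|Y_0|+1)$ vertices of $X$, usually far fewer than $|X|$.

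For example, let $|Y_0|=|Y_1|=1$, $|X_0|=100$, $|X_1|=10$, with the vertex of $Y_1$ adjacent exactly to $X_0$. Then (i) and (ii) hold and $k=\lceil 110/3\rceil=37$. Yet your scheme reaches at most $1+37\cdot 2=75<110$ vertices of $X$. Both ``extreme assignments'' you plan to compare use $Y_1$ only once, so both fail here.

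The missing idea is that $Y_1$ must be reused on many paths; in this example, on at least $36$ of the $37$. Write $n_s=|X_s|$ and $m_s=|Y_s|$. Outside the complete case all four are positive, because $X_1=\varnothing$ if and only if $Y_1=\varnothing$, and (ii) forces $X_0\neq\varnothing$.

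Take $j$ \emph{full} paths. Each runs through all of $Y_1$, alternating with $m_1+1$ vertices of $X_0$, and then through all of $Y_0$ with $m_0$ unrestricted $X$-slots. Take the other $k-j$ paths through $Y_0$ alone, with $m_0+1$ unrestricted slots each. After a routine assignment of vertices to slots, these cover $V(G)$ provided
$n_0\ge m_1+1$, $1\le j\le k$, $n_1\le k(m_0+1)-j$ and $n_0+n_1\le k(m_0+1)+jm_1$.

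These conditions are met as follows:
\begin{itemize}
\item $n_0\ge m_1+1$ follows from (i) and (ii). If $n_0\le m_1$, then (ii) gives $m_0>2n_1$, so $|X|<|Y|$.
\item (ii) also gives $k(m_0+1)>n_1$.
\item An admissible $j$ then exists as soon as $n_0+(m_1+1)n_1\le(m_1+1)(m_0+1)k$. Using $k\ge(n_0+n_1)/(m_0+m_1+1)$, this reduces to $n_0m_0\ge(m_1+1)n_1$.
\item (ii) implies that last inequality because $2m_1\ge m_1+1$.
\end{itemize}
So in this count the factor $2$ only pays for the extra $X_0$-vertex that every full path needs, not for a trade-off between the two kinds of capacity.
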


\begin{lemma}[Pokrovskiy, Versteegen and Williams~\cite{PokrovskiyVersteegenWilliams2026}]
\label{lem:blue-cover}
Let $K_n$ be red--blue edge-colored, and let $P$ be a blue path in $K_n$. Let $X=V(P)$ and $Y=V(K_n)\setminus X$.  
Let $Y_0$ be the set of vertices in $Y$ that  are not contained in a blue edge with any vertex in $X$, and let $Y_1=Y\setminus Y_0$.  Then $K_n$ admits a blue path cover of size at most \[1+\left\lceil \frac{|Y_1|}{2}\right\rceil+|Y_0|.\]
\end{lemma}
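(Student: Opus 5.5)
I would prove the statement in the strong form, producing \emph{vertex-disjoint} blue paths. This is also a blue path cover in the sense of the paper. Every vertex of $Y_0$ is taken as a single-vertex path, which accounts for the term $|Y_0|$. It then suffices to show that $X\cup Y_1$ can be partitioned into at most $1+\ceil{|Y_1|/2}$ blue paths. I would prove this by induction on $|Y_1|$, with a stronger statement that allows the starting structure to be any collection $\mathcal R$ of disjoint blue paths covering $X$. The claim is that absorbing $Y_1$ costs at most $\ceil{|Y_1|/2}$ additional paths, provided every $y\in Y_1$ has a blue neighbour on $\bigcup\mathcal R$. The lemma is the case $\mathcal R=\{P\}$. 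Absorbing vertices only enlarges $\bigcup\mathcal R$, so the hypothesis is preserved through the induction.

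\textbf{Free insertions.} A vertex $y\in Y_1$ can be absorbed at zero cost in either of two situations. The first is when $y$ is blue-adjacent to an endpoint of some path of $\mathcal R$; then we append $y$ there. The second is when $y$ has a blue neighbour $x$ on a path $R$ whose successor (or predecessor) $x^+$ along $R$ also satisfies $yx^+$ blue; then we insert $y$ between $x$ and $x^+$. If some $y$ admits a free insertion, we apply the induction hypothesis to $Y_1\setminus\{y\}$, using $\ceil{(|Y_1|-1)/2}\le\ceil{|Y_1|/2}$. So we may assume that no vertex of $Y_1$ admits a free insertion. In particular, for every blue neighbour $x$ of $y$ on a path, both neighbours of $x$ along that path send red edges to $y$, and $y$ is red to all endpoints.

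\textbf{Absorbing pairs.} Now take two vertices $y,y'\in Y_1$ and the goal is to absorb both at a cost of one extra path. If $yy'$ is blue, let $x$ be a blue neighbour of $y$ on a path $R=\dots x\,x^+\dots$. Replace $R$ by $(\dots x\,y\,y')$ and $(x^+\dots)$. If $yy'$ is red, pick blue neighbours $x$ of $y$ and $x'$ of $y'$. If they lie on the same path $R$, assume $x$ precedes $x'$. Look at the edge $x^+x'^+$. If it is blue, the standard crossing rerouting works: replace $R$ by $(\text{start}\dots x\,y)$ together with $(y'\,x'\dots x^+\,x'^+\dots\text{end})$, where $x'\dots x^+$ runs backwards along $R$. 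This gives one extra path and covers both vertices. The symmetric move works with predecessors if $x^-x'^-$ is blue. If $x,x'$ lie on different paths, we split both at these points and reconnect across. We then get two new paths out of two old ones plus one extra, at cost one. Equivalently, we can split only one path and use that the new endpoint $y$ is now available for a later free insertion. In every case we then recurse on $Y_1\setminus\{y,y'\}$.

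\textbf{Main obstacle.} The hard case is when $yy'$ is red, $x,x'$ lie on the same path, and both $x^+x'^+$ and $x^-x'^-$ are red. Here I would exploit the fact that there is freedom everywhere. We may choose which pair to process. We may choose among all blue neighbours of $y$ and $y'$. We may also pay cost one for $y$ alone, making $y$ and $x^+$ new endpoints. After that, any $y''$ blue to $y$ or to $x^+$ is inserted free, and this restores the average of one extra path per two absorbed vertices. If $y''$ is red to both, we obtain many red edges among $\{y,y',y'',x^+,\dots\}$. I would then try to show that these red edges force a blue edge $x^+x'^+$ for some other choice of neighbours, since the coloring is complete. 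This amortised bookkeeping is the step I expect to require the most case analysis. Throughout, the accounting invariant is that the extra paths used so far are at most half the number of absorbed vertices, rounded up.
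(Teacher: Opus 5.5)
Your plan relies on a strengthening that is false. In general $X\cup Y_1$ cannot be partitioned into $1+\lceil |Y_1|/2\rceil$ vertex-disjoint blue paths.

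Take $K_5$ on $\{x_1,x_2,x_3,y,y'\}$ whose blue edges are exactly $x_2x_1,x_2x_3,x_2y,x_2y'$, so the blue graph is a star $K_{1,4}$ centred at $x_2$. Let $P=x_1x_2x_3$. Then $Y_1=\{y,y'\}$, $Y_0=\emptyset$, and the bound is $1+\lceil 2/2\rceil=2$. But any partition of $K_{1,4}$ into vertex-disjoint paths needs three paths: the path through $x_2$ contains at most two leaves, and each remaining leaf has no blue edge except to $x_2$.

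This is exactly your unresolved case: $yy'$ is red, no free insertion exists, and here even $x=x'$, which your case split never treats. So no amount of amortised bookkeeping can close it. Red edges never ``force'' a blue edge elsewhere, because every edge outside the prescribed blue ones may simply be red.

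Your general inductive statement for an arbitrary disjoint family $\mathcal R$ also fails, and your step for two different paths is unjustified. Suppose $y$ and $y'$ are blue only to interior vertices $x,x'$ of two different paths $R_1,R_2\in\mathcal R$, and every edge other than those of $R_1$, $R_2$, $yx$, $y'x'$ is red. Then each blue component is a path with a pendant edge at an interior vertex, so each needs two disjoint paths. That gives four paths rather than $|\mathcal R|+1=3$, and there is no blue edge with which to ``reconnect across''.

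The missing observation is that disjointness is not needed, and without it the lemma is immediate. As the paper stresses in its definitions, the paths of a cover may intersect. The argument goes as follows.
\begin{enumerate}[label=\textup{(\alph*)}]
\item Pair up the vertices of $Y_1$ arbitrarily, leaving at most one unpaired.
\item For a pair $\{y,y'\}$, pick blue neighbours $x,x'\in X$. Take the blue path $yxy'$ if $x=x'$, or $yxPx'y'$ otherwise, where $xPx'$ is the segment of $P$ between $x$ and $x'$.
\item These $\lfloor |Y_1|/2\rfloor$ paths, together with $P$, the unpaired vertex of $Y_1$ (if any), and the vertices of $Y_0$ as one-vertex paths, form a blue path cover of size $1+\lceil |Y_1|/2\rceil+|Y_0|$.
\end{enumerate}

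The paper quotes the lemma from Pokrovskiy--Versteegen--Williams without proof. However, this is precisely the observation it uses in Subcase~2 to cover two vertices of $Y_1$ by one blue path through $Q$. In the example above it gives the cover $\{x_1x_2x_3,\ yx_2y'\}$.
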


\section{Proof of Theorem~\ref{thm:main}}

Let $\mathcal X_n$ be the set of all red--blue edge-colorings of $K_n$.  For
$\chi\in\mathcal X_n$, let $g(n,\chi)$ be the minimum size of a monochromatic
path cover whose paths all have the same color, and let $g(n)=\max_{\chi\in\mathcal X_n}g(n,\chi)$.

Suppose  that Theorem~\ref{thm:main} is false.  Choose a
counterexample $(G,\chi)$ with $G=K_n$ and $n$ minimal. Let \[c=\lfloor\sqrt{n}\rfloor~~~~~\textup{and}~~~~~n=c^2+r,\] where $c\ge 1$ and $0\le r\le 2c$. Thus $g(n,\chi)>c$, whereas
\begin{equation}
   g(m)\le\floor{\sqrt m}\qquad\text{for every }1\le m<n.
   \label{eq:minimality}
\end{equation}
For each $v\in V(G)$, let $\Nb(v)$ and $\Nr(v)$ denote the blue and red neighbourhoods of $v$, respectively. More precisely,
\[
\Nb(v)=\{x\in N_G(v):\chi(xv)=\textup{blue}\}
\quad\text{and}\quad
\Nr(v)=\{x\in N_G(v):\chi(xv)=\textup{red}\}.
\]

If $\chi$ is a cut coloring, choose $Q$ to be a longest monochromatic path with this property; otherwise, let $Q$ be any longest monochromatic path in $G$. By symmetry of red and blue, we may assume that $Q$ is blue. Write $Q=v_1v_2\cdots v_\ell$. Let $X=V(Q)$, $Y=V(G)\setminus X$ and $w=|Y|$. Partition $Y$ into $Y_0=\{u\in Y:N^b(u)\cap X=\emptyset\}$ and $Y_1=Y\setminus Y_0$. Let $a_0=|Y_0|$ and $a_1=|Y_1|=w-a_0$. 
\vskip 2mm
Before starting to prove Theorem \ref{thm:main}, we discuss some properties of the minimal counterexample, that is, Lemmas \ref{lem:intersection}-\ref{lem:residual}.

We first use minimality to bound intersections between red and blue paths. 
%This bound will be used throughout the proof.

\begin{lemma}
\label{lem:intersection}
If $P_1$ is a red path and $P_2$ is a blue path in $G$,
then $|V(P_1)\cap V(P_2)|\le r$.
\end{lemma}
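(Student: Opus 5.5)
The plan is a direct reduction to the minimality assumption \eqref{eq:minimality}: delete the common vertices of the two paths, cover what remains using a smaller case of the theorem, and then add back whichever of $P_1$, $P_2$ has the matching color. Suppose, for a contradiction, that $S=V(P_1)\cap V(P_2)$ satisfies $s=|S|\ge r+1$. We will show that $g(n,\chi)\le c$, contradicting the choice of $(G,\chi)$ as a counterexample.

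\emph{Degenerate case.} If $s=n$, then $P_1$ alone covers $V(G)$. So $g(n,\chi)=1\le c$, which is already a contradiction.

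\emph{Main case.} Otherwise let $G'=G[V(G)\setminus S]$. This is a complete graph on $m=n-s$ vertices with $1\le m<n$, and it carries the coloring inherited from $\chi$. The key estimate is
\[
m=n-s\le c^2+r-(r+1)=c^2-1.
\]
Hence $\floor{\sqrt m}\le c-1$. By \eqref{eq:minimality}, $V(G')$ is covered by at most $c-1$ monochromatic paths of $G'$, all of the same color. These are also monochromatic paths in $G$ of that color, since single-vertex paths may be regarded as either color.

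\emph{Adding one path back.}
\begin{itemize}
\item If the cover of $G'$ is red, add $P_1$. The path $P_1$ is red and contains all of $S$, so we obtain at most $c$ red paths covering $V(G)$.
\item If the cover is blue, add $P_2$ instead. It is blue and also contains $S$, so we obtain at most $c$ blue paths covering $V(G)$.
\end{itemize}
In either case $g(n,\chi)\le c$, contradicting $g(n,\chi)>c$.

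There is no real obstacle. The only points needing care are the arithmetic $n-s\le c^2-1$, which is exactly where the threshold $r$ comes from, and the fact that minimality applies to every coloring of every smaller complete graph, including the induced subcoloring on $G'$.
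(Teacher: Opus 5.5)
Your proposal is correct and follows essentially the same argument as the paper: assume $|S|\ge r+1$, handle $S=V(G)$ separately, and cover $G[V(G)\setminus S]$, which has at most $c^2-1$ vertices, by at most $c-1$ same-colored paths using minimality. You then add back $P_1$ or $P_2$ according to the color of that cover, exactly as the paper does.
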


\begin{proof}
Let $S=V(P_1)\cap V(P_2)$ and suppose that
$|S|\ge r+1$.  Then
\[
   |V(G)\setminus S|\le c^2-1.
\]
If $S=V(G)$, either $P_1$ or $P_2$ alone covers $G$, a
contradiction.  If $V(G)\setminus S\ne\emptyset$, then let $H=G[V(G)\setminus S]$ and $m=|V(H)|$.  By
\eqref{eq:minimality}, $H$ has a same-color monochromatic path cover
$\mathcal C$ of size at most $\floor{\sqrt m}\le c-1$.  If $\mathcal C$ is
red, add $P_1$; if it is blue, add $P_2$.  In either case
we obtain a same-color path cover of $G$ with at most $c$ paths, again a
contradiction.
\end{proof}

Next we estimate the bounds for $w$, $a_0$, and $a_1$.

\begin{lemma}
\label{lem:basic-bounds}
We have $c\le w\le r-2$, $a_1\ge1$ and $a_0\ge 2c-1-w\ge1$.
\end{lemma}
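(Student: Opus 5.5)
The plan is to combine three inputs: the blue cover of Lemma~\ref{lem:blue-cover}, the intersection bound of Lemma~\ref{lem:intersection}, and the length bound of Lemma~\ref{lem:long-path}, used together with Lemma~\ref{lem:EG-extension}. Throughout, "$g(n,\chi)>c$" means every same-color cover needs at least $c+1$ paths.

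\emph{Lower bound $w\ge c$.} If $w\le c-1$, then $Q$ together with the $w$ single vertices of $Y$ is a blue path cover of size $w+1\le c$, a contradiction. So $w\ge c$.

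\emph{The inequality for $a_0$.} Lemma~\ref{lem:blue-cover} applied to $P=Q$ gives a blue cover of size $1+\lceil a_1/2\rceil+a_0$. This must be at least $c+1$, so $a_0+\lceil a_1/2\rceil\ge c$. Using $\lceil a_1/2\rceil\le (a_1+1)/2$, this gives $2a_0+a_1\ge 2c-1$, that is, $a_0\ge 2c-1-w$. Once $w\le r-2\le 2c-2$ is proved, this yields $a_0\ge 1$.

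\emph{Showing $a_1\ge1$ and ruling out the cut case.} Suppose first $a_1=0$. Then every $X$--$Y$ edge is red, so the red graph contains the complete bipartite graph between $X$ and $Y$, with $|X|=\ell=n-w$.
\begin{itemize}[nosep]
\item If $\ell\ge w+1$, a red path alternating between $X$ and $Y$ meets $Q$ in $w+1$ vertices. Lemma~\ref{lem:intersection} then gives $w+1\le r$.
\item Splitting $X$ into $\lceil \ell/(w+1)\rceil$ blocks and threading $Y$ through each block gives a red cover of $G$ with $\lceil (n-w)/(w+1)\rceil$ paths.
\item Since $w\ge c$ and $r\le 2c$, we have $(c^2+r-w)/(w+1)\le (c^2+r-c)/(c+1)\le c$, so this red cover has at most $c$ paths, a contradiction.
\item If instead $\ell\le w$, the same red cover uses a single path.
\end{itemize}
Hence $a_1\ge1$.

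In the cut case, $Q$ closes into a blue cycle on $X$. A vertex of $Y_1$ has a blue neighbour on this cycle, so opening the cycle there yields a longer blue path, contradicting maximality of $Q$. Thus the cut case forces $a_1=0$, which we have just excluded. Therefore $\chi$ is not a cut coloring, and Lemma~\ref{lem:EG-extension} is available.

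\emph{Upper bound $w\le r-2$.} By Lemma~\ref{lem:long-path}, $\ell\ge\lfloor 2n/3\rfloor+1$, so $w=n-\ell\le n-\lfloor 2n/3\rfloor-1$. A short check over $n \bmod 3$ shows this is strictly less than $\lceil \ell/2\rceil$. Applying Lemma~\ref{lem:EG-extension} with $A=X$ and $B=Y$ gives a red path containing $w+2$ vertices of $Q$. Lemma~\ref{lem:intersection} then forces $w+2\le r$.

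The main obstacle is ordering the argument correctly. The cut-coloring case must be disposed of, via the $a_1=0$ red-bipartite argument, before Lemma~\ref{lem:EG-extension} can be invoked. I also expect the ceiling and rounding checks to need care: the inequality $w<\lceil\ell/2\rceil$ for small $n$, and the bound $\lceil (n-w)/(w+1)\rceil\le c$.
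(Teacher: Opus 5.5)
Your proof is correct and uses the same ingredients as the paper's: the singleton cover for $w\ge c$, Lemma~\ref{lem:blue-cover} for the bound on $a_0$, the alternating red cover when all $X$--$Y$ edges are red (using $n-w\le c(w+1)$), and Lemmas~\ref{lem:long-path}, \ref{lem:EG-extension} and \ref{lem:intersection} for $w\le r-2$. The paper handles the cut case directly inside the $w\le r-2$ step and reuses that argument for $a_1\ge1$; your order is reversed (prove $a_1\ge1$ first, then note the cut case forces $a_1=0$), and your first bullet giving $w+1\le r$ is not needed. Your remark that for $\ell\le w$ a single path suffices is false in general, since an alternating path meets at most $\ell+1$ vertices of $Y$; however, this case cannot occur, because Lemma~\ref{lem:long-path} gives $\ell>n/2>w$.
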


\begin{proof}
If $w\le c-1$, then $Q$, together with the $w$ singleton paths corresponding
to the vertices of $Y$, is a blue path cover of size at most $c$.  Hence
$w\ge c$. 

By Lemma~\ref{lem:long-path}, $w\le n-\floor{2n/3}-1$. Writing $n=3p$, $3p+1$, or $3p+2$ shows in each case that
$w<\ceil{|X|/2}$.  If $\chi$ is not a cut coloring, then applying
Lemma~\ref{lem:EG-extension}  with $A=X$ and $B=Y$ gives a red path
that contains $Y$ and $w+2$ vertices of $X$.  Its intersection with $Q$ has
size at least $w+2$, and hence Lemma~\ref{lem:intersection} yields $w\le r-2$.
If $\chi$ is a cut coloring, then by the
choice of $Q$, the edge $v_\ell v_1$ is blue. So $Qv_1$ is a blue cycle.  The
maximality of $Q$ implies that every edge between $X$ and $Y$ is red.  Set
$t=\ceil{|X|/(w+1)}$ and partition $X$ into sets $X_1,\ldots,X_t$ such that
$|X_1|=w+1$ and $|X_i|\le w+1$ for every $2\le i\le t$.  Such a partition
exists because $w<\ceil{|X|/2}$ implies $|X|\ge w+1$.  Write
$X_1=\{x_0,x_1,\ldots,x_w\}$ and $Y=\{y_1,\ldots,y_w\}$.  Then $x_0y_1x_1y_2\cdots y_wx_w$ is a red path covering $X_1\cup Y$.  Each remaining set $X_i$ can be covered
by a red path alternating between $X_i$ and $|X_i|-1$ distinct vertices of
$Y$; vertices of $Y$ may be reused by different paths because paths in a
cover are allowed to intersect.  Moreover,
\[
  |X|=n-w\le c^2+2c-w\le c(w+1),
\]
where the last inequality is equivalent to $(c+1)(w-c)\ge0$.  We therefore
obtain a red path cover with at most $c$ paths, a contradiction.  Therefore,
$w\le r-2$ in all cases.

By Lemma~\ref{lem:blue-cover} and the choice of $(G,\chi)$, we have $1+\ceil{a_1/2}+a_0\ge c+1$.
Since $a_1=w-a_0$, this implies $w+a_0\ge2c-1$, and hence
$a_0\ge2c-1-w$.  Combining this with $w\le r-2\le2c-2$ gives $a_0\ge1$.

Finally, if $a_1=0$, then every edge between $X$ and $Y$ is red. The same
alternating-path argument used above gives a red path cover with at most
$c$ paths.  Hence, $a_1\ge1$.
\end{proof}

A set $S\subseteq V(G)$ is called a \emph{red clique} if every pair of distinct vertices in $S$ is joined by a red edge. We next use the maximality of $Q$ to find a red clique. This also bounds blue degrees from $Y_1$ into $X$.

\begin{lemma}
\label{lem:red-clique}
For each $y\in Y_1$, the graph $G[X]$ contains a red clique of order
\[|\Nb(y)\cap X|+1.\]  Furthermore, $|\Nb(y)\cap X|\le r-2$.
\end{lemma}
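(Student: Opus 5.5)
Fix $y\in Y_1$ and let $\Nb(y)\cap X=\{v_{i_1},\dots,v_{i_k}\}$ with $i_1<\dots<i_k$, where $k\ge1$ because $y\in Y_1$. The plan is the standard rotation argument for a longest path. We show that
\[
S=\{v_{i_1-1},\dots,v_{i_k-1}\}\cup\{v_\ell\}
\]
is a red clique of order $k+1$. Here we must handle the case $i_1=1$ separately, and then we bound $k$ using Lemma~\ref{lem:intersection}.

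\textbf{Endpoints are not blue neighbours.} Neither $v_1$ nor $v_\ell$ is in $\Nb(y)$: otherwise $yv_1\cdots v_\ell$ or $v_1\cdots v_\ell y$ would be a longer blue path. So $i_1\ge2$ and $i_k\le\ell-1$. The vertices $v_{i_j-1}$ are then well defined and distinct, and none equals $v_\ell$, so $|S|=k+1$.

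\textbf{Red pairs among predecessors.} Take $j<j'$. If $v_{i_j-1}v_{i_{j'}-1}$ were blue, then
\[
v_1\cdots v_{i_j-1}\,v_{i_{j'}-1}v_{i_{j'}-2}\cdots v_{i_j}\,y\,v_{i_{j'}}\cdots v_\ell
\]
would be a blue path on $\ell+1$ vertices. This contradicts the maximality of $Q$, since $Q$ is a longest monochromatic path.

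\textbf{Red pairs with the endpoint $v_\ell$.} If $v_{i_j-1}v_\ell$ were blue, then
\[
v_1\cdots v_{i_j-1}\,v_\ell v_{\ell-1}\cdots v_{i_j}\,y
\]
would be a blue path on $\ell+1$ vertices, again a contradiction. Hence $S$ is a red clique in $G[X]$, which proves the first assertion.

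\textbf{Bound on $|\Nb(y)\cap X|$.} Since $S$ is a red clique, its vertices can be ordered into a red path $P_1$ with vertex set exactly $S$. Extend $P_1$ using vertices outside $X$ to gain extra intersection with $Q$; the main obstacle is getting from $k+1$ up to $k+2$.

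First, $y$ is joined in red to every vertex of $X\setminus\Nb(y)$, which includes $v_1$ and $v_\ell$. Hence we can form the red path $P_1'=(\text{clique path ending at }v_\ell)\,y\,v_1$, provided $v_1\notin S$.

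If $v_1\notin S$, the path $P_1'$ meets $Q$ in $k+2$ vertices. Lemma~\ref{lem:intersection} then gives $k+2\le r$, that is, $k\le r-2$.

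If $v_1\in S$, then $i_1=2$. In this case, also use the vertex $v_{i_1}$: if $v_2=v_{i_1}$ is joined in red to $v_\ell$ or to some predecessor, we extend along it. Alternatively, pick any other vertex of $X\setminus S$ that is red-adjacent to $y$ (there are $\ell-k-1-\ldots$ such vertices; since $\ell$ is large compared with $k$, some vertex of $X\setminus(S\cup\Nb(y))$ exists) and append it after $y$. Either way we get a red path meeting $Q$ in at least $k+2$ vertices.

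The only point needing care is the existence of a vertex of $X\setminus(S\cup\Nb(y))$. This follows if $\ell>2k+1$. We get that condition by first applying Lemma~\ref{lem:intersection} to $P_1$ and $y$: it gives $k+1\le r\le 2c$, while $\ell\ge\floor{2n/3}+1$ by Lemma~\ref{lem:long-path}, and $\floor{2n/3}+1$ far exceeds $2(2c)+1$ except for tiny $n$. Those tiny cases can be checked directly.
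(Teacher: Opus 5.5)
Your construction of the red clique $S$ is correct and matches the paper's: the endpoints are not blue neighbours of $y$, and the two rotation paths handle predecessor pairs and pairs with $v_\ell$. Your bound in the subcase $v_1\notin S$ is also valid: take a red path on $S$ ending at $v_\ell$, then $y$, then $v_1$. There is a genuine gap in the subcase $i_1=2$, however.

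\textbf{The gap.} Your fallback needs a vertex of $X\setminus(S\cup\Nb(y))$. Since $i_1\ge2$, $i_k\le\ell-1$ and $i_{j+1}\ge i_j+2$, you always have $\ell\ge 2k+1$. Equality holds exactly when $\Nb(y)\cap X=\{v_2,v_4,\dots,v_{\ell-1}\}$. In that case $S=\{v_1,v_3,\dots,v_\ell\}$ and $S\cup\Nb(y)=X$, so $y$ has no red neighbour in $X\setminus S$ at all. After the clique bound $k\le r-1$, the case you must exclude is $k=r-1$. There the bad configuration means $\ell=2r-1$, i.e.\ $c^2+1=r+w$. Even with all the bounds of Lemma~\ref{lem:basic-bounds} ($c\le w\le r-2$, $r\le 2c$), this occurs precisely for $c=3$, $r=6$, $w=4$, $n=15$, $\ell=11$. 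This is compatible with Lemma~\ref{lem:long-path}, since $\floor{2n/3}+1=11$. It is also exactly the case $c=3$, $w=4$ at the end of the main proof, where the bound $\mu\le r-2=4$ from this lemma is used. So it cannot be waved away as a tiny case to be checked directly. In this configuration the red path on $S$ meets $Q$ in $6=r$ vertices, Lemma~\ref{lem:intersection} yields nothing, and your method provably finds no extra vertex. Your other alternative is only conditional: you give no reason why $v_2$ must have a red neighbour in $S$, and its edges to $v_1,v_3\in S$ are edges of $Q$, hence blue.

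\textbf{The missing idea.} You need a vertex outside $X$ that is red to all of $X$. By Lemma~\ref{lem:basic-bounds}, which precedes this lemma, $Y_0\neq\emptyset$, and any $y_0\in Y_0$ is joined in red to every vertex of $X$. If $k=r-1$, then $|X\setminus S|=c^2-w\ge(c-1)^2+1>0$, so you can choose $x_0\in X\setminus S$. A red Hamilton path of $S$ ending at $v_\ell$, followed by $v_\ell y_0x_0$, is then a red path meeting $Q$ in $r+1$ vertices, contradicting Lemma~\ref{lem:intersection}. This is the paper's argument. It works uniformly, so no case distinction on whether $v_1\in S$ is needed.
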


\begin{proof}
Let $U=\Nb(y)\cap X$.  The maximality of $Q$ implies that
$v_1,v_\ell\notin U$ and that $U$ contains no two consecutive vertices of
$Q$.  If $v_i,v_j\in U$ with $i<j$ and $v_{i-1}v_{j-1}$ were blue, then
\[
 v_1Qv_{i-1}v_{j-1}Qv_i\,y\,v_jQv_\ell
\]
would be a blue path containing $X\cup\{y\}$, a contradiction.  Hence all
edges between predecessors of vertices in $U$ are red.  Similarly,
$v_\ell v_{i-1}$ is red for every $v_i\in U$; otherwise $v_1Qv_{i-1}v_\ell Qv_i\,y$  would be a blue path on $|X|+1$ vertices.  Consequently, \[S_y=\{v_\ell\}\cup\{v_{i-1}:v_i\in U\}\] is a red clique of order $|U|+1$.

If $|U|\ge r$, then $S_y$ contains a red path on $r+1$ vertices, contradicting
Lemma~\ref{lem:intersection}.  Thus $|U|\le r-1$.  Suppose that equality
holds.  Then $|S_y|=r$. By Lemma \ref{lem:basic-bounds}, we have
\[
   |X\setminus S_y|=c^2-w\ge c^2-(2c-2)=(c-1)^2+1>0.
\]
Choose $x_0\in X\setminus S_y$ and $y_0\in Y_0$, the latter being available
by Lemma~\ref{lem:basic-bounds}.  A red Hamilton path of $S_y$ ending at
$v_\ell$ extends through $v_\ell y_0x_0$ to a red path containing $r+1$
vertices of $X$.  This again contradicts Lemma~\ref{lem:intersection}. Hence, $|\Nb(y)\cap X|\le r-2$.
\end{proof}
In what follows, for each $y\in Y_1$, the notation $S_y$ always refers to the
particular red clique constructed in the preceding proof, namely
\[
S_y=\{v_\ell\}\cup\{v_{i-1}:2\le i\le\ell,\ v_i\in\Nb(y)\cap X\}.
\]
Define
\[
   \mu=\max_{y\in Y_1}|\Nb(y)\cap X|.
\]
Then $1\le\mu\le r-2$, and every $y\in Y$ has at least $|X|-\mu$ red
neighbours in $X$.

The following lemma records a suitable alternating extension of this red clique.
%We extend the red clique by a suitable alternating sequence. And we record this argument next.

\begin{lemma}
\label{lem:splice}
Let $y\in Y_1$ and $\sigma=|\Nb(y)\cap X|$. Let $S_y$ denote the red clique associated with $y$ in
Lemma~\ref{lem:red-clique}. Define $W=X\setminus S_y$ and $a=r-\sigma$.
Suppose that there exist pairwise distinct vertices
$y_1,\ldots,y_a\in Y$ and pairwise distinct vertices
$x_1,\ldots,x_a\in W$ such that $x_j\in\Nr(y_j)\cap\Nr(y_{j+1})\cap W$ for every $1\le j<a$, and $x_a\in\Nr(y_a)\cap W$.
Then $(G,\chi)$ is not a counterexample.
\end{lemma}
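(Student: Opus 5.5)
We will construct a red path meeting the blue path $Q$ in at least $r+1$ vertices and then invoke Lemma~\ref{lem:intersection}, which says no such path exists in a counterexample. The clique $S_y$ has $\sigma+1$ vertices, all in $X$ and all joined by red edges, and it contains $v_\ell$. Take a red Hamilton path of $G[S_y]$ that ends at a vertex $s\in S_y$, to be chosen later. Append to it the alternating sequence
\[
s\,y_1\,x_1\,y_2\,x_2\cdots y_a\,x_a .
\]
The edges $x_jy_j$ and $x_jy_{j+1}$ are red by hypothesis, and $x_ay_a$ is red as well. The only edge still to check is the junction edge $sy_1$. The vertices $x_j$ lie in $W=X\setminus S_y$ and are pairwise distinct, and the $y_j$ lie in $Y$ and are pairwise distinct. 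So the result is a path, and it meets $X=V(Q)$ in $(\sigma+1)+a=r+1$ vertices. Lemma~\ref{lem:intersection} then gives the contradiction.

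The key step is therefore to choose $s$ so that $sy_1$ is red. We have complete freedom over which vertex of the clique is the endpoint, so it suffices that some vertex of $S_y$ is a red neighbour of $y_1$. Every $y\in Y$ has at most $\mu\le r-2$ blue neighbours in $X$. However, $|S_y|=\sigma+1$ may be small, so this count alone does not force a red edge into $S_y$. We use the structure of $S_y$ instead.

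First consider $v_\ell$. Maximality of $Q$ implies that no vertex of $Y$ is joined by a blue edge to $v_\ell$ or $v_1$, since otherwise $Q$ extends. Hence $v_\ell y_1$ is red for every $y_1\in Y$. So we take $s=v_\ell$: a red Hamilton path of the clique $S_y$ ending at $v_\ell$, followed by $v_\ell y_1x_1y_2\cdots y_ax_a$. This handles the junction uniformly, with no case analysis on $y_1$.

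It remains to handle degenerate cases. If $a\le 0$, i.e.\ $\sigma\ge r$, this is already excluded by Lemma~\ref{lem:red-clique}, so $a\ge2$. If the paper's convention makes $a=r-\sigma$ count exactly the vertices needed, the total is $\sigma+1+a=r+1$, as computed above.

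The main obstacle I expect is confirming that the endpoint edge $v_\ell y_1$ is red. If $y_1$ happens to equal $y$ this is still fine, since $v_\ell\notin\Nb(y)$. I would double-check this via the maximality argument already used in Lemma~\ref{lem:red-clique}, where $v_1,v_\ell\notin U$ was derived. Once that holds, Lemma~\ref{lem:intersection} finishes the proof, showing $(G,\chi)$ is not a counterexample.
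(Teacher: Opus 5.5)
Your proof is correct and matches the paper's: take a red Hamilton path of the clique $S_y$ ending at $v_\ell$, note that $v_\ell y_1$ is red by the maximality of $Q$, and append $y_1x_1y_2\cdots y_ax_a$. This gives a red path meeting $V(Q)$ in $|S_y|+a=(\sigma+1)+(r-\sigma)=r+1$ vertices, contradicting Lemma~\ref{lem:intersection}. The detour about blue degrees into $S_y$ and the hedging over degenerate values of $a$ are unnecessary, since $a\ge 2$ follows directly from $\sigma\le r-2$.
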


\begin{proof}
Suppose that the hypotheses of Lemma \ref{lem:splice} hold. The sequence $y_1x_1y_2x_2\cdots y_ax_a$ is a red path.  Since $S_y$ is a red clique containing $v_\ell$, it has a
red Hamilton path ending at $v_\ell$.  The edge $v_\ell y_1$ is red by the
maximality of $Q$, so the two paths can be concatenated.  The resulting red
path contains $|S_y|+a=r+1$ vertices of $X$, contrary to Lemma~\ref{lem:intersection}.
\end{proof}

The following lemma provides a
method to cover subsets of $X$ by a few red paths.

\begin{lemma}
\label{lem:residual}
Let $D\subseteq X$ and $d=|D|$. Let $h\ge1$ be the number of red paths
available to cover $D$.  Let $p=d-h(a_0+1)$ and $q=\min\{p,h\}$. 
Then $D$ can be covered by at most $h$ red paths if at least one of the
following conditions holds:
\begin{enumerate}[label=\textup{(\roman*)}]
\setlength{\itemsep}{1pt}
\setlength{\parsep}{1pt}
\setlength{\parskip}{1pt}
\item $p\le0$;
\item $0<p\le\min\{h,\floor{(d-\mu)/2}\}$;
\item $p>0$, $p\le q|Y_1|$, $d-2\mu\ge p-q$, and
      $d-\mu\ge p+q$.
\end{enumerate}
\end{lemma}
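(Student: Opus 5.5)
The plan is to build the red paths explicitly, using two facts. First, every vertex of $Y_0$ has no blue neighbour in $X$, so it is red-adjacent to every vertex of $D$. Second, every $y\in Y$ has at least $|X|-\mu$ red neighbours in $X$, hence at least $d-\mu$ red neighbours in $D$. Two vertices of $Y$ therefore have at least $d-2\mu$ common red neighbours in $D$. Since paths in a cover may intersect, the same vertices of $Y$ may be reused in different paths. All the paths below alternate between $D$ and $Y$.

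\textbf{Case (i).} If $p\le 0$, then $d\le h(a_0+1)$. Partition $D$ into at most $h$ parts of size at most $a_0+1$. Cover a part $\{x_0,\dots,x_s\}$ by $x_0y_1x_1\cdots y_sx_s$, where $y_1,\dots,y_s\in Y_0$ are distinct. This is possible because $s\le a_0$, and every such edge is red.

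\textbf{Case (ii).} Here $0<p\le h$. Pick one vertex $y\in Y_1$, which exists by Lemma~\ref{lem:basic-bounds}, and let $R=\Nr(y)\cap D$, so $|R|\ge d-\mu\ge 2p$.
\begin{itemize}
\item Choose $p$ disjoint pairs $\{x_i,x_i'\}\subseteq R$.
\item For $i\le p$, let the $i$-th part consist of $x_i,x_i'$ and $a_0$ further vertices of $D$. Let the remaining $h-p$ parts have size at most $a_0+1$. This works because $d=h(a_0+1)+p$.
\item For $i\le p$, take the path $x_iyx_i'$ and continue it alternately through all $a_0$ vertices of $Y_0$ and the other $a_0$ vertices of the part. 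This path covers $a_0+2$ vertices of $D$.
\item Cover the other parts as in case (i).
\end{itemize}

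\textbf{Case (iii).} Now $p$ may exceed $h$, so the $q$ special paths must each absorb several extra vertices.
\begin{itemize}
\item Write $p=k_1+\dots+k_q$ with $1\le k_i\le |Y_1|$, which is possible since $p\le q|Y_1|$.
\item Path $i$ will contain a red segment $x^i_0 y^i_1 x^i_1 y^i_2\cdots y^i_{k_i}x^i_{k_i}$. Here $y^i_1,\dots,y^i_{k_i}$ are distinct vertices of $Y_1$ (reused freely across different $i$), and the $x^i_j$ are distinct vertices of $D$ over all $i,j$.
\item Each interior vertex $x^i_j$ with $0<j<k_i$ must be a common red neighbour of $y^i_j$ and $y^i_{j+1}$. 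There are $\sum_i(k_i-1)=p-q$ such vertices, and each pool of candidates has size at least $d-2\mu\ge p-q$.
\item Each endpoint $x^i_0$ or $x^i_{k_i}$ must be a red neighbour of a single $y$. There are $2q$ endpoints, each with a pool of size at least $d-\mu$.
\item Choose greedily, interior vertices first and then endpoints. When an interior vertex is chosen, fewer than $p-q$ vertices have been used, so a free one remains. When an endpoint is chosen, fewer than $(p-q)+2q=p+q\le d-\mu$ vertices have been used, so again a free one remains.
\item Each segment uses $k_i+1$ vertices of $D$. Extend it at one end by an alternating tail through the $a_0$ vertices of $Y_0$ and $a_0$ further vertices of $D$. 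Path $i$ then covers $a_0+1+k_i$ vertices of $D$.
\item Cover the rest of $D$ with the remaining $h-q$ paths, each of size at most $a_0+1$ in $D$. The total capacity is $h(a_0+1)+p=d$.
\end{itemize}

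The main obstacle is the greedy selection of distinct vertices in case (iii). The counting must be done in the stated order (interior vertices before endpoints) so that the two inequalities $d-2\mu\ge p-q$ and $d-\mu\ge p+q$ suffice. It must also be checked that the $k_i$ vertices of $Y_1$ inside a single path are distinct. This is guaranteed by $k_i\le|Y_1|$, and is the reason for requiring $p\le q|Y_1|$.
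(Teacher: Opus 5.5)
Your proposal is correct and follows the paper's proof essentially step for step. It uses the same alternating covers through $Y_0$ for (i), the same $p$ pairs inside the red neighbourhood of one vertex of $Y_1$ for (ii), and for (iii) the same splitting $p=\sum_{i\le q} b_i$ with $1\le b_i\le|Y_1|$, greedy choice of interior vertices before endpoints, $Y_0$-tails, and $h-q$ leftover blocks of size $a_0+1$.
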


\begin{proof}
Every set of at most $a_0+1$ vertices of $X$ lies on a red path alternating
between that set and $Y_0$.  The vertices of $Y_0$ may be reused in different
paths.  Thus, if $p\le0$, partitioning $D$ into at most $h$ such sets proves
(i).

Suppose (ii) holds.  Choose $z\in Y_1$.  Since
$|\Nr(z)\cap D|\ge d-\mu\ge2p$, choose $2p$ distinct vertices of
$\Nr(z)\cap D$ and group them into $p$ pairs.  Partition $D$ into $p$ sets of
size $a_0+2$, each containing one designated pair, and $h-p$ sets of size
$a_0+1$.  A set of the first kind, say
$\{u,v,x_1,\ldots,x_{a_0}\}$ with $u,v\in\Nr(z)$, is covered by $u z v y_1x_1y_2x_2\cdots y_{a_0}x_{a_0}$, where $Y_0=\{y_1,\ldots,y_{a_0}\}$.  Sets of the second kind are handled as
in (i).  Hence $h$ red paths cover $D$.

Assume (iii) holds.  Since $q\le p\le q|Y_1|$, there exist integers
$b_1,\ldots,b_q$ such that \[\sum_{i=1}^q b_i=p~~~~~\textup{and}~~~~~1\le b_i\le|Y_1|.\]
For each $i\in\{1,\ldots,q\}$, choose an ordered list of distinct vertices
$y_{i,1},\ldots,y_{i,b_i}$ of $Y_1$; lists belonging to different values of
$i$ may overlap. For each $i\in\{1,\ldots,q\}$ and each $j\in\{1,\ldots,b_i-1\}$, let $C_{i,j}=\Nr(y_{i,j})\cap\Nr(y_{i,j+1})\cap D$. Set $C_{i,0}=\Nr(y_{i,1})\cap D$ and
$C_{i,b_i}=\Nr(y_{i,b_i})\cap D$. Note that for any vertex $y\in Y$, $|\Nr(y)\cap D|\ge |D|-\mu\ge d-\mu$.  The inclusion--exclusion principle gives \[|C_{i,j}|\ge |N^r(y_{i,j})\cap D|+| N^r(y_{i,j+1})\cap D|-|D|\ge d-2\mu\ge p-q\] for each $i\in\{1,\ldots,q\}$ and each $j\in\{1,\ldots,b_i-1\}$.  Moreover, we have $|C_{i,0}|,|C_{i,b_i}|\ge d-\mu\ge p+q$ for each $i\in\{1,\ldots,q\}$.

We now construct $q$ red paths whose vertices alternate between $D$ and $Y_1$. For each $i\in\{1,\ldots,q\}$, the $i$-th path will contain the vertices $y_{i,1},y_{i,2},\ldots,y_{i,b_i}$ and have order $2b_i+1$. These paths require a total of $\sum_{i=1}^q(b_i-1)=p-q$ internal vertices from $D$ and $2q$ endpoints from $D$. The bounds established in the preceding paragraph allow us to choose the required vertices greedily. First, choose $u_{i,j}\in C_{i,j}$ for every $i\in\{1,\ldots,q\}$ and $j\in\{1,\ldots,b_i-1\}$; these vertices will serve as the internal vertices from $D$. Next, again greedily, choose the endpoints $u_{i,0}\in C_{i,0}$ and $u_{i,b_i}\in C_{i,b_i}$ for each $i\in\{1,\ldots,q\}$. In this way, all the chosen vertices $u_{i,j}$ are pairwise distinct. Consequently, for each $i\in\{1,\ldots,q\}$, $P_i=u_{i,0}y_{i,1}u_{i,1}\cdots y_{i,b_i}u_{i,b_i}$ is a red path. Collectively, these paths contain exactly $p+q$ vertices of $D$.

For each $i\in\{1,\ldots,q\}$, extend $P_i$ from its last endpoint by alternating between the $a_0$ vertices of $Y_0$ and $a_0$ previously unused vertices of $D$. After these extensions, the number of uncovered vertices of $D$ is $d-(p+q)-qa_0=(h-q)(a_0+1)$. Partition the remaining vertices into $h-q$ sets of size $a_0+1$, and cover each set using the construction in part~(i). Together with the $q$ extended paths, the resulting $h$ red paths cover $D$.
\end{proof}

\vskip 3mm
We now begin to prove Theorem \ref{thm:main}. Let
\begin{equation}
   t=1+\ceil{\frac{a_1}{2}}+a_0-c.
   \label{eq:t}
\end{equation}
By Lemma~\ref{lem:blue-cover} and the counterexample assumption, $t\ge1$.

We consider the following two cases separately. 
\vskip 2mm
\noindent{\bf Case 1. $c\ge 4$.}
\vskip 2mm
%\subsection{Excluding \texorpdfstring{$c\ge4$}{c >= 4}}

%We now combine previous lemmas to exclude $c\ge4$.

%\begin{lemma}
%\label{lem:c-at-most-three}
%We have $c\le3$.
%\end{lemma}

%\begin{proof}
%Suppose that $c\ge4$. 

We first show the following claim.

\begin{claim}
\label{cl:t-one}
$t=1$.
\end{claim}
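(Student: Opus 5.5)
We argue by contradiction. Suppose $t\ge2$, and aim to build a red path cover of $G$ with at most $c$ paths.

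\emph{Arithmetic consequences of $t\ge2$.} The assumption gives $a_0+\ceil{a_1/2}\ge c+1$. Since $\ceil{a_1/2}\le (a_1+1)/2$ and $a_1=w-a_0$, this yields $a_0\ge 2c+1-w$. Combined with $a_0\le w$ it gives $w\ge c+1$. Together with Lemma~\ref{lem:basic-bounds} this gives
\[
c+1\le w\le r-2\le 2c-2, \qquad |X|=c^2+r-w\le c^2+c-1 .
\]
Moreover $1\le\mu\le r-2\le 2c-2$, and every $y\in Y$ has at least $|X|-\mu$ red neighbours in $X$. For $c\ge4$ the quantity $|X|-2\mu\ge c^2-3c+2$ is therefore large compared with $c$.

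\emph{Cover $X$ and $Y_0$.} Apply Lemma~\ref{lem:residual} with $D=X$, $d=|X|$, $h=c$, $p=|X|-c(a_0+1)$ and $q=\min\{p,c\}$. If $p\le0$, option (i) covers $X$ by $c$ red paths. We may arrange that each path runs through all of $Y_0$, by lengthening any short path with repeated $Y_0$-alternations, since the vertices of $Y_0$ are red to all of $X$. If $p>0$, we verify the conditions in (iii):
\begin{itemize}
\item $d-\mu\ge p+q$ and $d-2\mu\ge p-q$ follow from $p\le |X|-c(2c+2-w)$ together with the lower bound on $|X|-2\mu$ above.
\item $p\le q\,a_1$ is the key count. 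If $q=c$ it reads $|X|\le c(a_0+a_1+1)=c(w+1)$. This is equivalent to $c^2+r-w\le c(w+1)$, which holds because $(c+1)w\ge (c+1)^2>c^2+r-c$. If $q=p<c$, it reduces to $a_1\ge1$, which holds by Lemma~\ref{lem:basic-bounds}.
\end{itemize}

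\emph{Cover $Y_1$.} Every vertex of $Y_1$ must also lie on some path. In construction (iii) the lists $y_{i,1},\dots,y_{i,b_i}$ may be chosen freely, subject only to $\sum_i b_i=p$ and $b_i\le a_1$. The plan is to distribute all of $Y_1$ among the $q$ lists; this is possible when $p\ge a_1$. Any remaining $Y_1$ vertex $z$ is then inserted into one of the $h-q$ residual paths. We do this by replacing a pair of consecutive $X$-vertices around a $Y_0$ vertex with $u\,z\,v$, where $u,v\in\Nr(z)$, exactly as in option (ii). This pairing step needs $\floor{(d-\mu)/2}$ large, which again follows from $\mu\le 2c-2$ and $c\ge4$.

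The resulting red cover has at most $c$ paths, contradicting the counterexample assumption. Hence $t=1$.

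\emph{Expected obstacle.} The main difficulty is making the greedy choices simultaneously consistent. The $p+q$ vertices used by the $Y_1$-segments, the $Y_0$-extensions, and the extra pairs for leftover $Y_1$ vertices must all be distinct and still number at most $|X|$. The inequalities are tight when $r$ is close to $2c$ and $w$ is close to $c+1$. If the direct count fails in that boundary range, I would instead invoke Lemma~\ref{lem:splice}. The abundant common red neighbourhoods $\Nr(y_j)\cap\Nr(y_{j+1})\cap W$, of size at least $|X|-2\mu-|S_y|$, would then give an alternating sequence of length $r-\sigma$, and hence a contradiction directly.
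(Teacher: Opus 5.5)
\textbf{There is a genuine gap: the proposal never shows that all of $Y_1$ gets covered.}

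Your check of the hypotheses of Lemma~\ref{lem:residual}(iii) for $D=X$, $h=c$ is essentially right. One small slip: $|X|-2\mu\ge c^2-3c+2$ is not justified for $c\ge5$; only $c^2-4c+6$ follows. This is harmless, because what you actually use is $c(a_0+1)-2\mu\ge c(2c+2-w)-2\mu\ge 4c-2(2c-2)>0$.

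The real problem is the step you flag yourself. Lemma~\ref{lem:residual} puts on its paths only $X$, $Y_0$ and the list entries, which are at most $\max\{p,0\}$ vertices of $Y_1$. You never show that the remaining vertices of $Y_1$ can be absorbed without a $(c+1)$-st path. Concretely:
\begin{enumerate}[label=(\alph*)]
\item When $q=c$ there are no residual paths at all, so your plan needs $p\ge a_1$. When $p<c$ (including $p\le0$), one insertion per residual path needs $a_1-\max\{p,0\}\le c-\max\{p,0\}$, i.e.\ $a_1\le c$. Neither inequality is proved.
\item An insertion ``$u\,z\,v$ as in option (ii)'' costs an extra vertex of $X$. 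This breaks the exact count $d=(p+q)+qa_0+(h-q)(a_0+1)$ on which the lemma's partition rests. Also, a residual block has only $a_0+1$ vertices, so it may lie entirely inside $\Nb(z)$, since $|\Nb(z)\cap X|$ can be as large as $r-2$. You would have to reserve red neighbours of the leftover vertices before making the greedy choices.
\item Your fallback is not available in general. Lemma~\ref{lem:splice} needs $r-\sigma$ \emph{distinct} vertices of $Y$, but $|Y|=w\le r-2$, so it fails whenever $\sigma<r-w$.
\end{enumerate}

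\textbf{Your route can be repaired.} Write $w=c+k$ with $1\le k\le c-2$; then $p=r-2c-k(c+1)+ca_1$.
\begin{itemize}
\item If $a_1\ge c+1$, this forces $p\ge c(c-k)+2>c$. Hence $p<c$ implies $a_1\le c$.
\item The combination $c\le p<a_1$ leads to $c<k$, which is impossible. Hence $p\ge c$ implies $p\ge a_1$.
\end{itemize}
So (a) always holds, and (b) can be handled by reserving red neighbours. But none of this is in the proposal.

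\textbf{The paper avoids this bookkeeping with one idea you are missing.} Since $|X|-w-2\mu\ge c^2-3r+8\ge(c-2)(c-4)\ge0$, every $y\in Y$ has at least $|X|-\mu\ge(|X|+|Y|)/2$ red neighbours in $X$. So Lemma~\ref{lem:bip-path}, applied to the red bipartite graph between $X$ and $Y$, gives a \emph{single} red path $P$ through all of $Y$ and exactly $w$ vertices of $X$. What remains is $D=X\setminus V(P)$, with $d=c^2+r-2w$, and it contains no vertex of $Y_1$. Covering $D$ by $h=c-1$ red paths is then a direct application of Lemma~\ref{lem:residual}(i), (ii) or (iii), using $a_0\ge3$, $\mu\le 2h$ and $d\le c^2-2$.
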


\begin{proof}
Suppose that $t\ge2$. By~(2), $a_0+\ceil{a_1/2}\ge c+1$. Since $\ceil{a_1/2}\le(a_1+1)/2$ and $w=a_0+a_1$, we obtain $a_0\ge 2c+1-w.$ Together with Lemma~\ref{lem:basic-bounds}, which gives $w\le r-2\le2c-2$, this yields $a_0\ge3$. By Lemmas \ref{lem:basic-bounds} and \ref{lem:red-clique}, we have 
\[|X|-w-2\mu=c^2+r-2w-2\mu\ge c^2-3r+8\ge c^2-6c+8\ge0.\]
Thus $|X|-\mu\ge(|X|+|Y|)/2$.  Every vertex of $Y$ has at least $|X|-\mu$ red neighbours in $X$. Applying Lemma~\ref{lem:bip-path} to the bipartite graph with parts $X$ and $Y$ whose edge set consists precisely of the red edges of $G$ joining $X$ and $Y$, we obtain a red path $P$ containing every vertex of $Y$ and exactly $w$ vertices of $X$. Let $D=X\setminus V(P)$, $d=|D|=c^2+r-2w$ and $h=c-1$. Define $p=d-h(a_0+1)$.

If $p\le0$, Lemma~\ref{lem:residual}(i) covers $D$ with $h$ red paths. Together with $P$, these $h$ red paths form a red path cover of $G$ of size at most $c$. Then $g(n,\chi)\le c$, a contradiction. If $0<p\le h$, then $d\le h(a_0+2)$. By Lemmas \ref{lem:basic-bounds} and \ref{lem:red-clique},  we have
\[d-\mu-2p=2h(a_0+1)-\mu-d\ge h a_0-(r-2)\ge3(c-1)-(2c-2)>0.\]
Hence Lemma~\ref{lem:residual}(ii) gives the same contradiction.

It remains to consider $p>h$.  Here $q=h$.  Since
$a_0\ge2c+1-w$ and $a_0\le w$, we have $w\ge c+1$, and therefore
$d\le c^2-2$.  As $a_1=w-a_0$,
\[
q|Y_1|-p=h(w+1)-d
   \ge(c-1)(c+2)-(c^2-2)>0.
\]
Moreover, $\mu\le r-2\le2c-2=2h$, so
\begin{align*}
d-2\mu-(p-q)&=h(a_0+2)-2\mu\ge h(a_0-2)\ge0,\\
d-\mu-(p+q)&=h a_0-\mu\ge h(a_0-2)\ge0.
\end{align*}
All hypotheses of Lemma~\ref{lem:residual}(iii) hold, again producing a red
path cover of size at most $c$.  So $t=1$.
\end{proof}

By Claim~\ref{cl:t-one},
\begin{equation}
   c=a_0+\ceil{\frac{a_1}{2}}.
   \label{eq:t-one-relation}
\end{equation}
We now divide the argument according to whether the largest blue neighbourhood
from $Y_1$ into $X$ is below or above the natural threshold $r/2$.

\medskip
\noindent\textbf{Subcase 1. $2\mu\le r$.} 
\vskip 2mm
Choose $z\in Y_1$ with $|\Nb(z)\cap X|=\mu$. Let $S_z$ be the red clique associated with $z$ by Lemma~\ref{lem:red-clique}. Define $W=X\setminus S_z$ and $a=r-\mu$. For each $y\in Y$, let $R_y=\Nr(y)\cap W$.  Since $\mu\le r/2\le c$,
\begin{align}
|R_y|-a
 &\ge |W|-\mu-(r-\mu)
   =c^2-w-\mu-1
   \ge c^2-3c+1>0,                                      \label{eq:single-R}\\
|R_y\cap R_{y'}|-(a-1)
 &\ge |W|-2\mu-(r-\mu-1)
   =c^2-w-2\mu
   \ge c^2-4c+2\ge0                                    \label{eq:double-R}
\end{align}
for all $y,y'\in Y$.

If $a\le w$, choose distinct $y_1,\ldots,y_a\in Y$. It follows from \eqref{eq:double-R} that $|R_{y_i}\cap R_{y_{i+1}}|\ge a-1$ for every $i\in\{1,\dots,a-1\}$. Moreover, \eqref{eq:single-R} yields $|R_{y_a}|\ge a$.  Choosing representatives greedily gives distinct $x_1,\ldots,x_a\in W$ satisfying Lemma~\ref{lem:splice}, a contradiction.

We may therefore assume that $a\ge w+1$.  This gives
$\mu\le r-w-1$, and Lemma~\ref{lem:basic-bounds} yields $a_0-\mu\ge2c-r\ge0$. Let $Y=\{u_1,\ldots,u_w\}$. For every $i\in\{1,\ldots,w-1\}$, \eqref{eq:double-R} yields $|R_{u_i}\cap R_{u_{i+1}}|\ge a-1\ge w$. Moreover, it follows from \eqref{eq:single-R} that $|R_{u_w}|\ge a\ge w+1$. Choose distinct $b_i\in R_{u_i}\cap R_{u_{i+1}}$ for $i<w$ and
$b_w\in R_{u_w}$. By \eqref{eq:single-R}, $|R_{u_1}|\ge a+1\ge w+2$, so we may then choose
$b_0\in R_{u_1}\setminus\{b_1,\ldots,b_w\}$.   Hence $P'=b_0u_1b_1u_2b_2\cdots u_wb_w$ is a red path covering all of $Y$ and $w+1$ vertices of $X$.

Let $D=X\setminus V(P')$,  $d=c^2+r-2w-1$ and $h=c-1$. Define $p=d-h(a_0+1)$.  If $p\le0$, apply
Lemma~\ref{lem:residual}(i).  If $0<p\le h$, then
$d\le h(a_0+2)$ and
\[
d-\mu-2p=2h(a_0+1)-\mu-d\ge h a_0-\mu\ge0,
\]
where we used $a_0\ge\mu$; thus Lemma~\ref{lem:residual}(ii) applies. Finally, suppose $p>h$ and set $q=h$.  By Lemma \ref{lem:basic-bounds}, we have $w\ge c$ and $d\le c^2-1$. So 
\[
q|Y_1|-p=h(w+1)-d\ge(c-1)(c+1)-(c^2-1)=0.
\]
Since $2\mu\le r\le 2c$ and $c\ge4$, we have $\mu\le c\le2c-2=2h$. Together with $a_0\ge\mu$, this implies that
\[d-2\mu-(p-q)=h(a_0+2)-2\mu\ge0~\textup{and}~d-\mu-(p+q)=h a_0-\mu\ge0.\]
Hence Lemma~\ref{lem:residual}(iii) applies. In every subcase, $P'$ and the
resulting $h$ red paths cover all vertices of $G$, contradicting the choice of $(G,\chi)$.
Thus, $2\mu\le r$ is impossible.

\medskip
\noindent\textbf{Subcase 2. $2\mu\ge r+1$.}
\vskip 2mm
Call a vertex $y\in Y_1$ \emph{high} if $2|\Nb(y)\cap X|\ge r+1$. For each $y\in Y_1$, define \[I(y)=\{i:v_i\in\Nb(y)~\textup{and}~i\in\{1,\ldots,\ell\}\}.\]
Let $\mathcal H$ be the $3$-uniform hypergraph on $Y_1$ in which
$\{u,x,y\}$ is an edge if, after possibly reordering these three vertices,
there are indices $\alpha\le \beta<\gamma\le \delta$ such that \[\alpha\in I(u),~~~~~\beta,\gamma\in I(x)~~~~~\textup{and}~~~~~\delta\in I(y).\]  Such an edge will be
called a \emph{triple}.  A set $Y'\subseteq Y_1$ is called \emph{triple-free} if $E(\mathcal H[Y'])=\emptyset$. For such a triple, $u v_\alpha Qv_\beta xv_\gamma Qv_\delta y$ is a blue path containing its three vertices.

By Claim~\ref{cl:t-one}, the upper bound supplied by
Lemma~\ref{lem:blue-cover} is $c+1$. Moreover, any two distinct vertices $y,y'\in Y_1$ can be covered by a blue path whose internal vertices lie on $Q$: choose blue neighbours $x,x'\in X=V(Q)$ of $y$ and $y'$, respectively, and use $yxy'$ if $x=x'$, or $yxQx'y'$ otherwise.  If $a_1$ is odd and
$E(\mathcal H)\ne\varnothing$, one triple can be covered by one blue path and
the remaining $a_1-3$ vertices of $Y_1$ can be paired, saving one path.  This
would give a blue path cover of size at most $c$.  Hence
\begin{equation}
   a_1\text{ odd}\quad\Longrightarrow\quad E(\mathcal H)=\varnothing.
   \label{eq:odd-triple-free}
\end{equation}
If $a_1$ is even and $\mathcal H$ contains two vertex-disjoint edges, then the two
triples can be covered by two blue paths. The remaining $a_1-6$ vertices can then be paired, saving one path. Therefore
\begin{equation}
   a_1\text{ even}\quad\Longrightarrow\quad \nu(\mathcal H)\le1,
   \label{eq:matching-one}
\end{equation}
where $\nu(\mathcal H)$ is the matching number of $\mathcal H$.

We next establish the structural estimate needed for every triple-free set
that contains a high vertex.

\begin{claim}
\label{cl:interval}
Let $Y'\subseteq Y_1$ be triple-free and suppose that $s\in Y'$ is high.  Let
$\mu_s=|I(s)|$.  Then
\[
   a_0+\max\{|Y'|-2,0\}<r-\mu_s.
   \label{eq:interval-conclusion}
\]
\end{claim}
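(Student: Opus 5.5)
The plan is to argue by contradiction through Lemma~\ref{lem:splice}, applied with $y=s$. So $\sigma=\mu_s$, $W=X\setminus S_s$ and $a=r-\mu_s$. Suppose that
\[
a_0+\max\{|Y'|-2,0\}\ge r-\mu_s=a .
\]
Then I will build distinct $y_1,\dots,y_a\in Y_0\cup Y'$ and distinct $x_1,\dots,x_a\in W$ satisfying the hypotheses of Lemma~\ref{lem:splice}, which says $(G,\chi)$ is not a counterexample.

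Two preliminary facts set up the bookkeeping.
\begin{enumerate}[label=\textup{(\alph*)}]
\item Since $s$ is high, $\mu_s\ge (r+1)/2$. Hence $a\le (r-1)/2\le c-1$, so the path to be built is short.
\item Every vertex of $Y_0$ is red to all of $X$, so $R_y=\Nr(y)\cap W$ equals $W$ for $y\in Y_0$. Every $y\in Y_1$ satisfies $|R_y|\ge |W|-\mu$.
\end{enumerate}
By Lemma~\ref{lem:basic-bounds}, $|W|=|X|-\mu_s-1\ge c^2-w-\mu_s-1$, which is of order $c^2-4c$. This comfortably exceeds $\mu+a$ for $c\ge4$.

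The first construction interleaves $Y_0$ with $Y'$. Any junction $y_j,y_{j+1}$ in which at least one vertex lies in $Y_0$ only needs $x_j\in R_{y_j}\cap R_{y_{j+1}}$, which is just a red neighbour in $W$ of the other vertex. Greedy choice of distinct representatives then works by (b). So I order the sequence as $y_1\in Y_0$, then a vertex of $Y'$, then a vertex of $Y_0$, and so on, always placing a $Y_0$ vertex between two $Y'$ vertices when possible. I end with a $Y_0$ vertex or with any vertex at all, since the final $x_a$ only needs $x_a\in R_{y_a}$. This uses up to $a_0+(a_0+1)$ vertices without ever needing a common red neighbour of two $Y_1$ vertices. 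So if $|Y'|-2\le a_0+1$, the count $a\le a_0+|Y'|-2$ is achieved immediately.

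The remaining situation, when $|Y'|$ is much larger than $a_0$, is the main obstacle. Here consecutive pairs $y,y'\in Y'$ must appear, and I need $|R_y\cap R_{y'}|$ large. The naive bound $|W|-2\mu$ can be too weak, since it can become negative for small $c$. This is where triple-freeness enters.

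I will order $Y'$ by the smallest index in $I(\cdot)$. Triple-freeness means that if some $x\in Y'$ has two blue indices $\beta<\gamma$, then no other member of $Y'$ has a blue index $\le\beta$ while a third has one $\ge\gamma$. From this I expect to show that all but at most two vertices of $Y'$ have blue neighbourhoods confined to pairwise almost disjoint intervals of $Q$. These two exceptional vertices are plausibly where the ``$-2$'' comes from. For the confined vertices, consecutive vertices in the chosen order then have $|\Nb(y)\cup\Nb(y')|$ controlled by the interval lengths. Combined with the high vertex $s$ occupying at least $(r+1)/2$ positions, this should give $|R_y\cap R_{y'}|\ge a$, so greedy selection of distinct $x_j$ goes through.

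Finally I discard the two exceptional vertices, which may include $s$ itself, chain the remaining $|Y'|-2$ vertices, interleaved with $Y_0$ wherever available, and truncate to exactly $a$ vertices. Lemma~\ref{lem:splice} then gives the contradiction, proving the claimed strict inequality.
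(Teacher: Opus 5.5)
Your overall plan (a contradiction via Lemma~\ref{lem:splice} with $y=s$ and $a=r-\mu_s\le c-1$) is the paper's. Your interleaving construction is sound, provided you keep the exact value $|W|=c^2+r-w-\mu_s-1$. The $r$ cancels against $a$, giving $|W|-\mu-a=c^2-w-\mu-1\ge(c-1)(c-3)>0$. With the $r$ dropped, as you wrote it, your bound does not give $|W|\ge\mu+a$ for $c\in\{4,5\}$.

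The genuine gap is the case $|Y'|\ge a_0+4$. ``I expect to show \dots\ plausibly \dots\ should give'' is not an argument, and the structure you predict is not what triple-freeness gives. Write $I(s)=\{k_1<\dots<k_{\mu_s}\}$, $k_0=0$, $k_{\mu_s+1}=\ell+1$, and use $s$ as the middle vertex of a triple. Triple-freeness then forbids, for $i<j$, two distinct vertices of $Y'\setminus\{s\}$, one with a blue index $\le k_i$ and the other with one $\ge k_j$. Take $e\in\{0,\dots,\mu_s-1\}$ maximal such that at most one vertex of $Y'\setminus\{s\}$ has a blue index $\le k_e$. Then no vertex of $Y'\setminus\{s\}$ has a blue index $\ge k_{e+2}$. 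So the set $J$ of vertices of $Y'\setminus\{s\}$ with no blue index $\le k_e$ (hence $|J|\ge|Y'|-2$) has all its blue neighbours in \emph{one common} window $v_{k_e+1}Qv_{k_{e+2}-1}$, not in pairwise disjoint intervals; all of $J$ may even share a single blue neighbour. The paper takes $F$ to be the vertices of $W$ outside this window. Every edge from $J\cup Y_0$ to $F$ is red, and since consecutive $k_i$ differ by at least $2$, $|F|\ge\mu_s-1\ge r-\mu_s$; this is exactly where highness of $s$ is used. The complete red bipartite graph between $J\cup Y_0$ and $F$ then feeds Lemma~\ref{lem:splice} in any order, with no pairwise intersection estimates needed.

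Alternatively, your own framework closes by pure counting, because you under-used fact (a). Your interleaved chain can contain $a_0+\min\{|Y'|,a_0+1\}$ vertices. For $c=4$ we have $a\le 3\le 2a_0+1$ (as $a_0\ge1$), so truncating the chain always suffices. For $c\ge5$ the naive bound is not too weak: using $w,\mu\le 2c-2$,
$|W|-2\mu-a=c^2-w-2\mu-1\ge c^2-6c+5=(c-1)(c-5)\ge0$.
So any two vertices of $Y$ have at least $a$ common red neighbours in $W$. Hence any $a$ distinct vertices of $Y$ (available, since $w\ge c>a$) can be chained greedily. In fact this shows that for $c\ge4$ a high vertex alone already yields a contradiction via Lemma~\ref{lem:splice}, without triple-freeness. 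As submitted, though, the decisive case of your proposal rests on an unproved and inaccurate structural guess.
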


\begin{proof}
 Let $k_0=0$, $k_{\mu_s+1}=\ell+1$ and $I(s)=\{k_1,k_2,\ldots,k_{\mu_s}\}$, where $k_1<k_2<\ldots<k_{\mu_s}$. For every $i\in\{0,1,\ldots,\mu_s\}$, define \[M_i=\{y\in Y'\setminus\{s\}:\min I(y)\le k_i\}~\textup{and}~N_{i+1}=\{y\in Y'\setminus\{s\}:\max I(y)\ge k_{i+1}\}.\] 
If $1\le i<j\le\mu_s$, there cannot be distinct vertices $z_1\in M_i$ and $z_2\in N_j$. Otherwise, suppose that $z_1\in M_i$ and $z_2\in N_j$ are distinct. Then $z_1,z_2\in Y'$. Choose $b_1\in I(z_1)$ with $b_1\le k_i$ and $b_2\in I(z_2)$ with $b_2\ge k_j$. Since $k_i, k_j\in I(s)$ and $b_1\le k_i<k_j\le b_2$, $\{z_1,s,z_2\}\subseteq Y'$ is a triple, a contradiction.

Choose $e\in\{0,\ldots,\mu_s-1\}$ maximal subject to $|M_e|\le1$; such an
$e$ exists because $M_0=\varnothing$.  We have $N_{e+2}=\varnothing$.
Indeed, if $e<\mu_s-1$ and $N_{e+2}\ne\varnothing$, maximality gives
$|M_{e+1}|\ge2$, contradicting the preceding paragraph; if
$e=\mu_s-1$, the assertion is simply $N_{\mu_s+1}=\varnothing$.

Let $J=Y'\setminus(M_e\cup\{s\})$. If $J=\varnothing$, let $F_0=\varnothing$.  Otherwise, every blue neighbour
in $X$ of every vertex of $J$ lies on the subpath $Q'=v_{k_e+1}Qv_{k_{e+2}-1}$, and we let $F_0=V(Q')$.  Let $S_s$ be the red clique associated with $s$ by Lemma~\ref{lem:red-clique}. Let $W=X\setminus S_s$ and $F=W\setminus F_0$.  By construction, all edges between $J\cup Y_0$ and $F$ are red.

We claim that $|F|\ge r-\mu_s$.  Suppose that $J=\varnothing$. By Lemma \ref{lem:basic-bounds}, $w\le r-2\le 2c-2$. It follows that \[|F|=|X|-|S_s| =c^2+r-w-\mu_s-1\ge(c-1)^2+r-\mu_s\ge r-\mu_s.\] Suppose that $J\ne\varnothing$.  The maximality of $Q$ implies $k_1\ge2$, $k_{\mu_s}\le\ell-1$, and $k_{i+1}-k_i\ge2$ for $1\le i<\mu_s$.  By Lemma~\ref{lem:red-clique}, $S_s\cap F_0=\{v_{k_{e+1}-1},v_{k_{e+2}-1}\}.$
It follows that $|F|=\ell-\mu_s+2-(k_{e+2}-k_e)$.  All $\mu_s+1$ gaps between $k_0,k_1,\ldots,k_{\mu_s+1}$ have size at least
$2$.  Hence $k_{e+2}-k_e\le\ell-2\mu_s+3$, and therefore $|F|\ge\mu_s-1\ge r-\mu_s$, the last inequality following
from the fact that $s$ is high.

Finally, $|M_e|\le1$ gives $|J|\ge\max\{|Y'|-2,0\}$. If $a_0+\max\{|Y'|-2,0\}\ge r-\mu_s$, we could choose
$r-\mu_s$ distinct vertices from $J\cup Y_0$ and the same number of distinct
vertices from $F$.  The complete red bipartite graph between these two sets
would satisfy the hypothesis of Lemma~\ref{lem:splice}, a contradiction.
This proves the claim.
\end{proof}

We now apply Claim~\ref{cl:interval}.  If $a_1$ is odd, then
$Y'=Y_1$ is triple-free by \eqref{eq:odd-triple-free} and contains a high
vertex.  Let $a_1=2m+1$.  By \eqref{eq:t-one-relation}, we have $a_0=c-m-1$ and $w=c+m$. 
If $a_1=1$, then $a_0=c-1$; if $a_1\ge3$, then $a_0+a_1-2=w-2\ge c-1$.  In both cases,
\[
 a_0+\max\{a_1-2,0\}\ge c-1\ge r-\mu_s,
\]
because a high vertex satisfies $r-\mu_s\le\floor{(r-1)/2}\le c-1$.
This contradicts Claim~\ref{cl:interval}.

If $a_1$ is even and $E(\mathcal H)=\varnothing$, the same argument applies
with $a_1=2m\ge2$. Then $a_0=c-m$, $w=c+m$, and
\[
a_0+a_1-2=w-2\ge c-1\ge r-\mu_s,
\]
again contradicting Claim~\ref{cl:interval}.  Consequently,
\begin{equation}
   a_1\text{ is even and }E(\mathcal H)\ne\varnothing.
   \label{eq:even-nonempty}
\end{equation}

\begin{claim}
\label{cl:no-edge-all-high}
No edge of $\mathcal H$ contains every high vertex of $Y_1$.
\end{claim}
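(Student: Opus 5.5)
The plan is to argue by contradiction through Claim~\ref{cl:interval}: if some edge $e=\{u,x,y\}$ of $\mathcal H$ contained every high vertex, I will build a large triple-free set $Y'\subseteq Y_1$ that still contains a high vertex $s$. I will then check that $a_0+\max\{|Y'|-2,0\}\ge r-\mu_s$, which contradicts Claim~\ref{cl:interval}.

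The facts available at this point are these. By \eqref{eq:even-nonempty}, $a_1=2m$ is even and $E(\mathcal H)\neq\varnothing$. By \eqref{eq:matching-one}, $\nu(\mathcal H)\le1$, so any two edges of $\mathcal H$ intersect. Since $\mathcal H$ has an edge, $a_1\ge3$, and as $a_1$ is even, $a_1\ge4$, i.e.\ $m\ge2$. Then \eqref{eq:t-one-relation} gives $a_0=c-m$ and $w=c+m$. A high vertex exists because $2\mu\ge r+1$, and every high $s$ satisfies $r-\mu_s\le\floor{(r-1)/2}\le c-1$. Hence it is enough to find a triple-free $Y'$ containing a high vertex with $a_0+|Y'|-2\ge c-1$, that is, $|Y'|\ge m+1$. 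Since $a_1=2m$, this means $Y'$ only has to keep slightly more than half of $Y_1$, which leaves room to delete vertices.

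\emph{Step 1.} Let $L=Y_1\setminus e$. Every vertex of $L$ is non-high, because all high vertices lie in $e$. Every edge of $\mathcal H$ meets $e$, by the intersecting property. Fix a high $s\in e$, and consider the sets $L\cup\{s\}$, or $L\cup\{s\}$ together with one further vertex of $e$. Any triple inside $L\cup\{s\}$ must contain $s$ together with two vertices of $L$.

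\emph{Step 2.} I will show that, for a suitable choice of $s$, no such triple $\{s,z_1,z_2\}$ with $z_1,z_2\in L$ exists. Suppose one did, say $f$. Then $f\cap e=\{s\}$. I would splice the two blue paths given by $e$ and by $f$, through the common vertex $s$ and the order of indices on $Q$, into a single blue path covering all five vertices. The remaining $2m-5$ vertices are then paired, which gives a blue cover of size at most $c$ and contradicts the counterexample assumption. If this splicing cannot be done for one high vertex of $e$, I will use the interval and gap structure of $I(s)$ from the proof of Claim~\ref{cl:interval} to show that it can be done for another, or that the two triples force a third triple disjoint from one of them, contradicting $\nu(\mathcal H)\le1$.

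\emph{Step 3.} Once $Y'=L\cup\{s\}$ is triple-free, $|Y'|=2m-2\ge m+1$ whenever $m\ge3$. The boundary case $m=2$ (so $a_1=4$ and $|L|=1$) is handled by hand: a triple-free pair always works in Claim~\ref{cl:interval}, since then $\max\{|Y'|-2,0\}=0$ and the needed inequality reduces to $a_0=c-2\ge r-\mu_s$. I will verify this last inequality using the sharper bound available when $\mu_s$ is large, or I will fall back on Lemma~\ref{lem:splice} directly.

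The main obstacle is Step~2: excluding a second triple through $s$ that uses two vertices outside $e$. The matching bound $\nu(\mathcal H)\le1$ alone does not forbid such a triple, so the argument has to exploit the specific geometry of the triples along $Q$ rather than just counting.
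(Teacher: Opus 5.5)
\textbf{There is a genuine gap: Step 2 is unproven and its splicing argument fails in general, and Step 3 does not close the case $a_1=4$.}

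\emph{Step 2.} Your route through Claim~\ref{cl:interval} needs $L\cup\{s\}$ to be triple-free. Nothing you establish gives this. The bound $\nu(\mathcal H)\le1$ allows an edge $\{s,z_1,z_2\}$ with $z_1,z_2\notin e$. Splicing the two blue paths at $s$ is not justified. The $Q$-segments $v_\alpha Qv_\beta$ and $v_\gamma Qv_\delta$ used by the two triples can overlap. Also, a single blue path routed along $Q$ through all five vertices needs three of them to be internal vertices with two compatibly placed blue neighbours on $Q$, and nothing guarantees this. For instance, if $z_1$, $z_2$ and one vertex of $e\setminus\{s\}$ each have only one blue neighbour on $Q$, at most two of the five can be internal. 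Your fallback ("use the gap structure of $I(s)$ for another high vertex, or force a third triple") is a promise, not an argument.

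\emph{Step 3.} For $a_1=4$ a triple-free pair only gives $a_0=c-2\ge r-\mu_s$ as the needed inequality. This can fail, since a high vertex only guarantees $r-\mu_s\le c-1$, with equality possible (e.g.\ $r=2c$, $\mu_s=c+1$; this is compatible with $w=c+2\le r-2$ for $c\ge4$).

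\emph{The missing idea.} The hypothesis ``$T$ contains every high vertex'' is meant to give a blue-degree bound, not triple-freeness. Every vertex of $V_1=Y\setminus T$ has at most $\eta=\lfloor r/2\rfloor$ blue neighbours in $X$. This holds for the non-high vertices of $Y_1$ and trivially for all of $Y_0$.

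Fix a high $s'\in T$, and set $W'=X\setminus S_{s'}$ and $a'=r-|I(s')|\le c-1$. Then:
\begin{itemize}
\item $|V_1|=a_0+a_1-3=c+a_1/2-3\ge c-1\ge a'$.
\item Each vertex of $V_1$ has at least $|W'|-\eta$ red neighbours in $W'$, and $|W'|-\eta-a'=c^2-w-1-\eta\ge0$.
\item Any two vertices of $V_1$ have at least $|W'|-2\eta$ common red neighbours in $W'$, and $|W'|-2\eta-(a'-1)=c^2-w-2\eta\ge0$.
\end{itemize}
The two inequalities use $w\le r-2$, $r\le2c$ and $c\ge4$.

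So you can take any $a'$ vertices of $V_1$ and greedily choose the distinct vertices of $W'$ required by Lemma~\ref{lem:splice}, which gives the contradiction. This needs no information about how triples sit on $Q$. Your remark about ``falling back on Lemma~\ref{lem:splice} directly'' points this way, but the essential ingredients are missing: the $\lfloor r/2\rfloor$ bound for everything outside the edge, and counting $Y_0$ together with $Y_1\setminus T$ among the spliced vertices.
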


\begin{proof}
Suppose that an edge $T$ contains every high vertex, and choose a high vertex
$s'\in T$.  Let $\mu'=|I(s')|,$ $a'=r-\mu'$ and $\eta=\floor{r/2}$.
Let $C=S_{s'}$ be the red clique from Lemma~\ref{lem:red-clique} and set
$W'=X\setminus C$ and $V_1=Y\setminus T$.  Since $a_1$ is even and
$T\in E(\mathcal H)$, we have $a_1\ge4$.  By
\eqref{eq:t-one-relation},
\[
   |V_1|=a_0+a_1-3=c+\frac{a_1}{2}-3\ge c-1\ge a'.
\]
Note that every vertex of $V_1$ has at most $\eta$ blue neighbours in $X$.  Hence
each vertex of $V_1$ has at least $|W'|-\eta$ red neighbours in $W'$, and any
two vertices of $V_1$ have at least $|W'|-2\eta$ common red neighbours there.
Furthermore,
\begin{align*}
|W'|-\eta-a'&=c^2-w-\eta-1
   \ge c^2-r-\eta+1\ge0,\\
|W'|-2\eta-(a'-1)&=c^2-w-2\eta
   \ge c^2-r-2\eta+2\ge0,
\end{align*}
where we used $w\le r-2$, $r\le2c$, and $c\ge4$.  We may therefore choose
$a'$ distinct vertices of $V_1$ and greedily select the distinct vertices required
by Lemma~\ref{lem:splice}, with $S_{s'}=C$ and $W=W'$.  This contradiction
proves the claim.
\end{proof}

Choose an edge $T\in E(\mathcal H)$.  By
Claim~\ref{cl:no-edge-all-high}, the set $Y_1\setminus T$ contains a high
vertex.  Choose $s\in Y_1\setminus T$ so that $|I(s)|$ is maximum over
$Y_1\setminus T$. Let $\mu_s=|I(s)|$ and $Y'=Y_1\setminus T$. By \eqref{eq:matching-one}, $Y'$ is triple-free.  In particular, every vertex of $Y'$ has at most $\mu_s$ blue neighbours in $X$.  We may therefore apply Claim~\ref{cl:interval} to $Y'$ and $s$. Suppose that $a_1\ge8$. Then
\[
a_0+|Y'|-2
 =a_0+a_1-5
 =c+\frac{a_1}{2}-5
 \ge c-1
 \ge r-\mu_s,
\]
contradicting Claim~\ref{cl:interval}. Hence, by \eqref{eq:even-nonempty}, $a_1=2\lambda$ for some
$\lambda\in\{2,3\}$. Finally, \eqref{eq:t-one-relation} yields $a_0=c-\lambda$ and $w=c+\lambda$.

Let $S_s$ be the red clique associated with $s$ by Lemma \ref{lem:red-clique}. Let
$W=X\setminus S_s$ and $b=r-\mu_s$.  Since $c\ge 4$ and $\lambda\le 3$, we have $|W|-b=c^2-c-\lambda-1\ge0$.
If $a_0\ge b$, choose $b$ vertices of $Y_0$ and $b$ vertices of $W$. Since
all edges between $Y_0$ and $W$ are red, Lemma~\ref{lem:splice} gives a contradiction. Now assume that $a_0<b$.  Since $b\le c-1$ and $\lambda\le 3$, we have \[b-a_0\le(c-1)-(c-\lambda)=\lambda-1\le2.\] 
Choose all $a_0$ vertices of $Y_0$ and $b-a_0$ vertices of $Y'$.  There are
enough vertices in $Y'$, since $|Y'|=2\lambda-3$ and
$b-a_0\le\lambda-1\le2\lambda-3$.  Order the resulting $b$ vertices as
$z_1,\ldots,z_b$ so that no two consecutive vertices belong to $Y'$.  This
is possible because $a_0=c-\lambda\ge1$ and $b-a_0\le2$.  Every vertex of
$Y'$ has at most $\mu_s$ blue neighbours in $X$.  Therefore, for every
consecutive pair, and also for the last vertex, the relevant red
neighbourhood in $W$ has size at least $|W|-\mu_s$.  Since $\mu_s\le r-2\le 2c-2$, we have
\[
|W|-\mu_s-b
 =c^2-c-\lambda-\mu_s-1
 \ge c^2-3c-\lambda+1\ge0.
\]
Thus the distinct vertices required by Lemma~\ref{lem:splice} can be chosen
greedily. Therefore, $2\mu\ge r+1$ is also impossible.

Both cases are impossible, so the assumption $c\ge4$ cannot hold.
%\end{proof}

\vskip 2mm
\noindent{\bf Case 2. $c\le 3$}
\vskip 2mm
%\subsection{The remaining values of \texorpdfstring{$c$}{c}}

%It remains to exclude $1\le c\le 3$. The next lemma handles these cases.

%\begin{lemma}
%\label{lem:small-c}
%No minimal counterexample exists with $1\le c\le3$.
%\end{lemma}

%\begin{proof}
By Lemmas~\ref{lem:long-path}, \ref{lem:basic-bounds} and~\ref{lem:red-clique}, we obtain 
\begin{equation}
c\le w\le r-2,\qquad r\le2c,\qquad
w\le n-\floor{\frac{2n}{3}}-1,                         \label{eq:small-basic}
\end{equation}
together with
\begin{equation}
   1\le\mu\le r-2.                                      \label{eq:small-mu}
\end{equation}
It follows from \eqref{eq:small-mu} that $r\ge3$. 

If $c=1$, then \eqref{eq:small-basic} gives $r\le2c=2$, a contradiction. Hence, $2\le c\le 3$.

Suppose that $c=2$. It follows from \eqref{eq:small-basic} that $r=4$ and $w=2$. Lemma~\ref{lem:basic-bounds} then yields $a_0=a_1=1$. Let $Y_1=\{z\}$ and $Y_0=\{y_0\}$. Thus, $\mu=|\Nb(z)\cap X|$.
Let $S_z$ denote the red clique associated with $z$ as in Lemma~\ref{lem:red-clique}, and set $W=X\setminus S_z$. Since $|X|=6$, we have $|W|=5-\mu$. Finally, \eqref{eq:small-mu} gives $\mu\in\{1,2\}$. If $\mu=2$, then $|\Nr(z)\cap W|\ge|W|-\mu\ge1$.  Choose
$x_1\in\Nr(z)\cap W$ and $x_2\in W\setminus\{x_1\}$.  Since every edge from
$y_0$ to $X$ is red, the vertices $z,y_0,x_1,x_2$ satisfy Lemma~\ref{lem:splice} with $a=r-\mu=2$, a contradiction.

Suppose that $\mu=1$.  Since $|\Nr(z)\cap X|=|X|-\mu=5$, choose $x_1\in\Nr(z)\cap X$ and
$x_2\in X\setminus\{x_1\}$.  Since every edge from $y_0$ to $X$ is red,
$P_1=zx_1y_0x_2$ is a red path. Moreover,
$|\Nr(z)\cap W|\ge|W|-\mu\ge3$, so we may choose
$x_0\in(\Nr(z)\cap W)\setminus\{x_1,x_2\}$.  Then
$P_2=x_0zx_1y_0x_2$ is red.  Let
$D=X\setminus\{x_0,x_1,x_2\}$.  For $d=3$, $h=1$, $a_0=1$, and $\mu=1$, we
have \[p=d-h(a_0+1)=1\le\min\{h,\floor{\frac{d-\mu}{2}}\}.\] 
Lemma~\ref{lem:residual}(ii) covers $D$ with one red path; together with
$P_2$, this is a red path cover of size $c=2$, a contradiction.  Hence
$c=2$ is impossible.

It remains to consider the case $c=3$.  From \eqref{eq:small-basic}, we have $w\in\{3,4\}$ and  $r\in\{5,6\}$. First suppose that $w=3$.  Lemma~\ref{lem:basic-bounds} yields $a_0=2$ and $a_1=1$. Let $H$ be the bipartite graph with parts $X$ and $Y$ whose edges are precisely the red edges of $G$ joining $X$ and $Y$. For this graph, the sets on the $Y$-side defined in Lemma~\ref{lem:bip-cover} are precisely the previously defined sets $Y_0$ and $Y_1$, since $d_H(y)=|X|$ if and only if $y$ has no blue neighbour in $X$. Let $z$ be the unique vertex of $Y_1$.  Define $X_0=\{x\in X:d_H(x)=|Y|\}$ and  $X_1=X\setminus X_0$.
We have $|X|=6+r\ge11$ and $\mu\le r-2\le4$, so $|X|>2\mu$.
Furthermore, $|X_1|\le\mu$ and $|X_0|\ge|X|-\mu$. It follows that
\[
   |X_0|>|X_1|=\frac{2|X_1|}{|Y_0|}\,|Y_1|.
\]
Equivalently,
\[
   \frac{|X_0|}{|Y_1|}>\frac{2|X_1|}{|Y_0|}.
\]
All hypotheses of Lemma~\ref{lem:bip-cover} are satisfied.  It yields a red
path cover with at most
\[
   \ceil{\frac{|X|}{|Y|+1}}
   =\ceil{\frac{6+r}{4}}\le3=c
\]
paths, a contradiction.

Finally, suppose that $w=4$.  Then \eqref{eq:small-basic} forces $r=6$, and
$|X|=11$.  Choose $z\in Y_1$ with $|\Nb(z)\cap X|=\mu$ and let $S_z$ be the
corresponding red clique. Let $W=X\setminus S_z$.  Thus
$|W|=10-\mu$ and $1\le\mu\le4$.  Fix $y_0\in Y_0$ and
$y_1\in Y\setminus\{z,y_0\}$. If $\mu=4$, then $|\Nr(z)\cap W|\ge2$.  Choose
$x_1\in\Nr(z)\cap W$ and $x_2\in W\setminus\{x_1\}$; the sequence
$z,x_1,y_0,x_2$ satisfies Lemma~\ref{lem:splice} with $a=2$, a contradiction.
If $\mu=3$, then
\[
|\Nr(z)\cap\Nr(y_1)\cap W|\ge|W|-2\mu\ge1,
\qquad |\Nr(y_1)\cap W|\ge|W|-\mu\ge4.
\]
Because every edge from $y_0$ to $W$ is red, we may greedily choose the three
distinct vertices required by Lemma~\ref{lem:splice} for the ordered
vertices $z,y_1,y_0$.  This again gives a contradiction.

We are left with $\mu\le2$. Every vertex of $Y$ has at least $|X|-\mu\ge9$ red neighbours in $X$. Hence Lemma~\ref{lem:bip-path} yields a red path on $2|Y|=8$ vertices. Since this path lies in the bipartite graph with parts $X$ and $Y$, it contains four vertices from each part; as $|Y|=4$, it contains every vertex of $Y$. Orienting the path from its endpoint in $Y$, write $P_1=z_1x_1z_2x_2z_3x_3z_4x_4$, where $x_1,x_2,x_3,x_4\in X$. Since $|\Nr(z_1)\cap X|\ge9$, we may choose
\[
x_0\in(\Nr(z_1)\cap X)\setminus\{x_1,x_2,x_3,x_4\}.
\]
Consequently, $P_2=x_0z_1x_1z_2x_2z_3x_3z_4x_4$ is a red path. Now let $D=X\setminus\{x_0,x_1,x_2,x_3,x_4\}$ and $h=c-1=2$.  Then $d=6$.  Here \[p=d-h(a_0+1)=4-2a_0.\]
If $p\le0$, Lemma~\ref{lem:residual}(i) applies.  If $p>0$, then $a_0\ge1$
gives $p\le2=h$, while \[p\le2\le\floor{\frac{d-\mu}{2}}.\] Thus Lemma~\ref{lem:residual}(ii) applies.  In either case, $D$ is covered by
$h$ red paths, and these paths together with $P_2$ form a red path cover of
$G$ of size at most $c=3$.  This contradiction shows that $c=3$ is also impossible.
%\end{proof}
\vskip 2mm
By the arguments above, both $c\ge 4$ and $c\le 3$ are impossible, which implies that the minimal counterexample  does not exist, and so Theorem \ref{thm:main} follows.

%\begin{proof}[Proof of Theorem~\ref{thm:main}]
%A minimal counterexample has $c=\floor{\sqrt n}\ge1$.
%Lemma~\ref{lem:c-at-most-three} gives $c\le3$, whereas
%Lemma~\ref{lem:small-c} excludes every value $1\le c\le3$.  Therefore no
%minimal counterexample exists, and the theorem follows.
%\end{proof}

\section*{Declarations}
The authors declare that they have no known competing financial interests or personal relationships that could have appeared to influence the work reported in this paper.
\section*{\bf\Large Availability of Data and Materials}  
Not applicable.
\section*{Acknowledgments}
This research is supported by National Key R\&D Program of China under grant number 2024YFA1013900, NSFC under grant numbers 12471327 and 12401454, Natural Science Foundation of Fujian Province under grant number 2024J01875, Science-Technology Foundation of Putian University under grant number 2023059.


\begin{thebibliography}{99}
\small \setlength{\itemsep}{-.10mm}

\bibitem{Allen2008}
P.~Allen,
Covering two-edge-colored complete graphs with two disjoint monochromatic
cycles,
Combin. Probab. Comput. \textbf{17} (4) (2008)  471--486.

\bibitem{BessyThomasse2010}
S.~Bessy, S.~Thomass\'e,
Partitioning a graph into a cycle and an anticycle, a proof of Lehel's
conjecture,
J. Combin. Theory Ser. B \textbf{100} (2) (2010) 176--180.

\bibitem{ErdosGyarfas1995}
P.~Erd\H{o}s, A.~Gy\'arf\'as,
Vertex covering with monochromatic paths,
Math. Pannon. \textbf{6} (1995)  7--10.

\bibitem{EugsterMousset2018}
M.~Eugster, F.~Mousset,
Vertex covering with monochromatic pieces of few colors,
Electron. J. Combin. \textbf{25} (3) (2018) Paper No. P3.33.


\bibitem{GerencserGyarfas1967}
L.~Gerencs\'er, A.~Gy\'arf\'as,
On Ramsey-type problems,
Ann. Univ. Sci. Budapest. E\"otv\"os Sect. Math. \textbf{10} (1967) 167--170.

\bibitem{Gyarfas1989}
A.~Gy\'arf\'as,
Covering complete graphs by monochromatic paths,
in: Irregularities of Partitions,
Fert\H{o}d, 1986, in: Algorithms Combin. Study Res. Texts, vol. 8, Springer, Berlin, 1989, pp. 89--91.



\bibitem{GyarfasLehel1973}
A.~Gy\'arf\'as, J.~Lehel,
A Ramsey-type problem in directed and bipartite graphs,
Period. Math. Hungar. \textbf{3} (1973) 299--304.

\bibitem{Gyarfas2016}
A.~Gy\'arf\'as,
Vertex covers by monochromatic pieces---a survey of results and problems,
Discrete Math. \textbf{339} (7) (2016) 1970--1977, 7th Cracow Conference on Graph Theory, Rytro 2014. 

\bibitem{LuczakRodlSzemeredi1998}
T.~\L{}uczak, V.~R\"odl, E.~Szemer\'edi,
Partitioning two-colored complete graphs into two monochromatic cycles,
Combin. Probab. Comput. \textbf{7} (1998) 423--436.


\bibitem{PokrovskiyVersteegenWilliams2026}
A.~Pokrovskiy, L.~Versteegen, E.~Williams,
A proof of a conjecture of Erd\H{o}s and Gy\'arf\'as on monochromatic path
covers,
J. Combin. Theory Ser. B \textbf{176} (2026)  551--560.
\end{thebibliography}
\end{document}